\documentclass[12pt]{elsarticle}
\usepackage[T1]{fontenc}
\usepackage[utf8x]{inputenc}
\usepackage{lmodern}
\usepackage{amsmath,amsthm,amsfonts,amssymb}    
\usepackage{graphicx}
\usepackage{enumitem}
\usepackage[margin=25mm]{geometry}  
\usepackage{hyperref}   
\usepackage{stmaryrd}
\usepackage{picture}
\usepackage{mathtools}
\usepackage{tikz}   
\usepackage[french,intoc]{nomencl}    
\makenomenclature   
\usepackage{makeidx}
\makeindex
\usepackage{bbold}  
\usepackage{pgfplots}   
\pgfplotsset{compat=1.15}   
\usepackage{mathrsfs}   
\usetikzlibrary{arrows}   
\usepackage{subfigure}
\usepackage{xcolor}
\usepackage{eurosym}
\usepackage{fourier}
\usepackage{tkz-tab}    
\usepackage{diagbox}    
\usepackage{pifont}     
\allowdisplaybreaks     


\newcommand{\N}{\mathbb{N}}

\newcommand{\R}{\mathbb{R}}

\newcommand{\C}{\mathbb{C}}

\usepackage{xparse}

\NewDocumentCommand{\compactpmatrix}{m}{%
  {%
    \renewcommand{\arraystretch}{0.8}%
    \setlength{\arraycolsep}{3pt}%
    \begin{pmatrix}
      #1
    \end{pmatrix}
  }%
}

\NewDocumentCommand{\s}{m m m m m}{%
  \ensuremath{%
    \compactpmatrix{
      1 & 2 & 3 & 4 & 5 & 6 \\
      1 & #1 & #2 & #3 & #4 & #5
    }
  }%
}

\newcommand{\di}[2]{\left(a_#1^2-a_#2^2\right)}

\DeclareMathOperator{\RE}{Re}

\theoremstyle{definition}
\newtheorem{defi}{Definition}[section]  
\newtheorem{prop}[defi]{Proposition}    
\newtheorem{thm}[defi]{Theorem}
\newtheorem{lem}[defi]{Lemma}

\newtheorem{rem}[defi]{Remark}

\theoremstyle{remark}
\newtheorem*{lem*}{Lemma}
\newtheorem*{coro*}{Corollary}

\numberwithin{equation}{section}

\begin{document}

\begin{frontmatter}



\title{Minimizing numerical radius of weighted cyclic matrices under
permutation of the weights}


\author{Simon Marionnet} 
\ead{simon.marionnet@univ-lille.fr}

\affiliation{organization={Laboratoire Paul Painlevé, Université de Lille},
            city={Villeneuve d’Ascq},
            postcode={59 655 Cédex}, 
            country={France}}

\begin{abstract}
In this article we answer a question asked by Chien et al. in \cite{chien2023numerical} in which they study the numerical range of weighted cyclic matrices under permutation of their entries. Namely, we are interested in how $w(A_\sigma)$ fluctuates for various permutations $\sigma\in S_n$ and fixed $0\leq a_1<\cdots<a_n$ with $A_\sigma=\begin{pmatrix}
            0&a_{\sigma(1)}&&&\\
            &0&a_{\sigma(2)}&&\\
            &&\ddots&\ddots&\\
            &&&\ddots&a_{\sigma(n-1)}\\
            a_{\sigma(n)}&&&&0
        \end{pmatrix}$. Previous results of Gau \cite{gau2024proof} and Chang and Wang \cite{chang2012maximizing} made clear the case when $w(A_\sigma)$ is maximal among all the $w(A_\mu)$ with $\mu\in S_n$. Chien et al. in \cite{chien2023numerical} ask what the permutation which makes $w(A_\sigma)$ minimal for $n\geq 6$ could be. Answering this question is the aim of this note.
\end{abstract}



\begin{keyword}
Numerical radius \sep Numerical range \sep Weighted cyclic matrix

\MSC[2010] 15A60 \sep 15B99

\end{keyword}

\end{frontmatter}


\section{Introduction}
Recall that for an operator $A$ acting on a complex Hilbert space (here a matrix acting on $\C^n$), the numerical range of $A$ is:
\begin{equation*}
    W(A):=\left\{\langle Ax,x\rangle\mid \lVert x\rVert=1\right\}\subset\C.
\end{equation*}
The numerical radius of $A$ is $w(A):=\sup\left\{\lvert z\rvert\mid z\in W(A)\right\}$. We know that for every operator $A$, the set $W(A)$ is convex and we have $\sigma(A)\subseteq\overline{W(A)}$ where $\sigma(A)$ is the spectrum of $A$. For the finite-dimensional case, we know that $W(A)$ is always a closed subset of the complex plane (it is the range of the unit sphere which is compact by a continuous map). Furthermore, $W(A)=W(U^*AU)$ for all unitary operator $U$. One may find all useful material about the numerical range and the numerical radius of an operator in the book \cite{gau2021numerical}.

We denote by $\RE(A)$ the real part of an operator, $\RE(A):=\frac{1}{2}(A+A^*)$.

A matrix of the shape $\begin{pmatrix}
            0&a_1&&&\\
            &0&a_2&&\\
            &&\ddots&\ddots&\\
            &&&\ddots&a_{n-1}\\
            a_n&&&&0
        \end{pmatrix}$ is called a weighted cyclic matrix. It is known from Proposition 1.1 of \cite{gau2024proof} that this is unitarily equivalent to:
        \begin{equation*}
            e^{\left(i\sum_{k=1}^n\theta_k\right)/n}\begin{pmatrix}
            0&\lvert a_1\rvert&&&\\
            &0&\lvert a_2\rvert&&\\
            &&\ddots&\ddots&\\
            &&&\ddots&\lvert a_{n-1}\rvert\\
            \lvert a_n\rvert&&&&0
        \end{pmatrix}
        \end{equation*} if we write $a_k=\lvert a_k\rvert e^{i\theta_k}$ with $\theta_k\in\R$ for all $k$. So, because multiplying an operator by a complex number of modulus 1 does not change the numerical radius, we will only consider such matrices with positive weights. However, the space on which such a matrix acts will remain $\C^n$. 

A matrix is called reducible if it is permutationaly similar to a matrix of the shape $\begin{pmatrix}
    X&Y\\
    0&Z
\end{pmatrix}$ with $X,Z$ square matrices. An irreducible matrix is a matrix that is not reducible.

For $n\in\N^*$, we set:
\begin{equation*}
    \mathcal{A}_n:=\left\{a=\left(a_1,\cdots,a_n\right)\mid 0\leq a_1<a_2<\cdots<a_n\right\}.
\end{equation*}
For $a\in\mathcal{A}_n$ and $\sigma\in S_n$ the symmetric group, we set $A_\sigma:=\begin{pmatrix}
            0&a_{\sigma(1)}&&&\\
            &0&a_{\sigma(2)}&&\\
            &&\ddots&\ddots&\\
            &&&\ddots&a_{\sigma(n-1)}\\
            a_{\sigma(n)}&&&&0
        \end{pmatrix}$. If the weights are designed by $b\in\mathcal{A}_n$ we denote by $B_\sigma$ the corresponding matrix and so on for $c,d\cdots$
        
        For $a\in\mathcal{A}_n$, we look for the permutations $\sigma\in S_n$ such that $w\left(A_{\sigma}\right)\leq w(A_\mu),\forall \mu \in S_n$.

In \cite{chang2012maximizing}, the authors denote by $H_n$ the subgroup of $S_n$ spanned by $c_n=\begin{pmatrix}
    1&2&3&\cdots&n\\
    n&1&2&\cdots&n-1
\end{pmatrix}$ and $m_n=\begin{pmatrix}
    1&2&3&\cdots&n-1&n\\
    n&n-1&n-2&\cdots&2&1
\end{pmatrix}$. They show that if $\sigma,\mu \in S_n$ are such that $\sigma=\mu h_n$ with $h_n\in H_n$, then $W(A_\sigma)=W(A_\mu)$. So we will quotient $S_n$ by $H_n$ and consider only one representative of each class of $S_n/H_n$. For $\sigma \in S_n$, composing with one of the powers of $c_n$, we may assume that $\sigma(1)=1$. Then, applying $m_n$, we may assume that $\sigma(2)<\sigma(n)$. From now on, we will just consider such permutations. We will make the identification between the classes in $S_n/H_n$ and the representatives $\sigma$ with $\sigma(1)=1$ and $\sigma(2)<\sigma(n)$ in $S_n$. It is easy to see that $H_n\cong D_n$ is the dihedral group with $2n$ elements. Thus, for this study we just need to consider $(n-1)!/2$ permutations in $S_n/H_n$ instead of $n!$ in $S_n$.

We know from \cite{chang2012maximizing} that for all $n\geq 3$ and for all $a\in\mathcal{A}_n$, there exists a unique permutation $\sigma_M^{(a)}\in S_n/H_n$ such that $w\left(A_{\sigma_M^{(a)}}\right)\geq w(A_\mu)$ for all $\mu\in S_n/H_n$. $\sigma_M^{(a)}$ is given by
\begin{equation*}
    \begin{pmatrix}
        1&\cdots&\left[\frac{n}{2}\right]-1&\left[\frac{n}{2}\right]&\left[\frac{n}{2}\right]+1&\left[\frac{n}{2}\right]+2&\left[\frac{n}{2}\right]+3&\cdots&n\\
        i&\cdots&4&2&1&3&5&\cdots&j
    \end{pmatrix}
\end{equation*}
with $(i,j)=(n-1,n)$ if $n$ is odd and $(i,j)=(n,n-1)$ if $n$ is even.

Gau proved in \cite{gau2024proof} that for all $n\in\N$ and $a\in\mathcal{A}_n$, we also have $W\left(A_\mu\right)\subseteq W\left(A_{\sigma_M^{(a)}}\right)$ for all $\mu\in S_n$.

In \cite{chien2023numerical}, the authors show that for all $a\in\mathcal{A}_4$ we have $w(A_{\sigma_4})\leq w(A_\mu)$ for all $\mu\in S_4$ if we set $\sigma_4=\compactpmatrix{
            1&2&3&4\\
            1&3&2&4}$. They show also that for all $a\in\mathcal{A}_5$, $w(A_{\sigma_5})\leq w(A_\mu)$ for all $\mu\in S_5$ with  $\sigma_5=\compactpmatrix{
            1&2&3&4&5\\
            1&4&3&2&5}$.

They ask in the remark 7.2 what can be the minimizing permutation, if it exists, for all $a\in\mathcal{A}_n$ with $n\geq6$. The answer we have is that for $n\geq6$, the situation depends on the $a$ chosen. We set
\begin{equation*}
    \mathcal{M}_n:=\left\{\sigma\in S_n/H_n\mid \exists a\in\mathcal{A}_n;\forall \mu\in S_n/H_n, w(A_\sigma)\leq w(A_\mu)\right\}.
\end{equation*}

In fact, our answer is more precise and it is given in Theorem \ref{final} as follows
\begin{equation*}
        \mathcal{M}_6=\left\{\s{5}{3}{4}{2}{6},\s{4}{5}{3}{2}{6},\s{5}{4}{3}{2}{6},\s{5}{4}{2}{3}{6},\s{4}{5}{2}{3}{6}\right\}.
\end{equation*}
In particular, the cardinal of $\mathcal{M}_6$ is greater than one in contrast with the case $n<6$.

\section{Tools for the proof}

For $a\in\mathcal{A}_n$, $A_\sigma$ is a non-negative matrix and $\RE(A)$ is irreducible. Proposition 3.3 of \cite{li2002numerical} tells us that finding the numerical radius of $A_\sigma$ is the same as finding the largest eigenvalue of the real part of $A_\sigma$. By the Perron-Froebenius theorem, this eigenvalue is simple and real. Furthermore, the associated eigenvector has only positive components. So comparing the numerical ranges of $A_\sigma$ and $A_\mu$ is the same as comparing the largest zeros of the characteristic polynomials of $\RE(2A_\sigma)$ and $\RE(2A_\mu)$. From here on, we will denote them by $\mathcal{P}_{\sigma,a}$ and $\mathcal{P}_{\mu,a}$ respectively.

\underline{As of now, we set n=6.} By the discussion about $H_n$ in the introduction, we don't need to consider the 720 permutations of $S_6$ but "only" the 60 representatives of each class in $S_6/H_6$. This is what we are going to do in the sequel.

The polynomial $\mathcal{P}_{\sigma,a}$ is computed in \cite{chien2023numerical}. For every $\sigma \in S_6$,
\begin{align*}
    \mathcal{P}_{\sigma,a}(x)&=x^6-\left(a_1^2+a_2^2+a_3^2+a_4^2+a_5^2+a_6^2\right) x^4\\
    +&\scalebox{0.8}{$\left(a_{\sigma(1)}^2a_{\sigma(3)}^2+a_{\sigma(1)}^2a_{\sigma(4)}^2+a_{\sigma(1)}^2a_{\sigma(5)}^2+a_{\sigma(2)}^2a_{\sigma(4)}^2+a_{\sigma(2)}^2a_{\sigma(5)}^2+a_{\sigma(2)}^2a_{\sigma(6)}^2+a_{\sigma(3)}^2a_{\sigma(5)}^2+a_{\sigma(3)}^2a_{\sigma(6)}^2+a_{\sigma(4)}^2a_{\sigma(6)}^2\right)$}x^2\\
    -&a_{\sigma(1)}^2a_{\sigma(3)}^2a_{\sigma(5)}^2-a_{\sigma(2)}^2a_{\sigma(4)}^2a_{\sigma(6)}^2+2a_1a_2a_3a_4a_5a_6.
\end{align*}

In the following, we will denote
\begin{align*}
    &\scalebox{0.85}{$\left(a_{\sigma(1)}^2a_{\sigma(3)}^2+a_{\sigma(1)}^2a_{\sigma(4)}^2+a_{\sigma(1)}^2a_{\sigma(5)}^2+a_{\sigma(2)}^2a_{\sigma(4)}^2+a_{\sigma(2)}^2a_{\sigma(5)}^2+a_{\sigma(2)}^2a_{\sigma(6)}^2+a_{\sigma(3)}^2a_{\sigma(5)}^2+a_{\sigma(3)}^2a_{\sigma(6)}^2+a_{\sigma(4)}^2a_{\sigma(6)}^2\right)$}\\
    &\text{ by }\left(a_{\sigma(1)}^2a_{\sigma(3)}^2+\cdots +a_{\sigma(4)}^2a_{\sigma(6)}^2\right).
\end{align*}

By replacing $x^2$ by $x$, in the expression of $\mathcal{P}_{\sigma,a}(x)$, we see that comparing the largest zero of $\mathcal{P}_{\sigma,a}(x)$ and $\mathcal{P}_{\mu,a}(x)$ boils down to comparing the largest zero of $P_{\sigma,a}(x)$ and $P_{\mu,a}(x)$, where for $\sigma \in S_6$,
\begin{align*}
    P_{\sigma,a}(x)&:=x^3-\left(a_1^2+a_2^2+a_3^2+a_4^2+a_5^2+a_6^2\right) x^2+\left(a_{\sigma(1)}^2a_{\sigma(3)}^2+\cdots +a_{\sigma(4)}^2a_{\sigma(6)}^2\right)x\\
    -&a_{\sigma(1)}^2a_{\sigma(3)}^2a_{\sigma(5)}^2-a_{\sigma(2)}^2a_{\sigma(4)}^2a_{\sigma(6)}^2+2a_1a_2a_3a_4a_5a_6.
\end{align*}

Our main tool for comparing these largest zeros will be the following simple lemma.

\begin{lem}\label{trivial}
    Let $f(t),g(t)$ be two real monic polynomials of the same degree having at least one real zero. Let's denote by $t_f$ and $t_g$, the largest real zeros of $f$ and $g$ respectively. Let $h(t)$ be defined by $h(t)=f(t)-g(t)$. If one of the following holds:
    \begin{enumerate}
        \item $h(t_f)> 0$,
        \item $h(t)>0$ for all $t\geq t_g$;
    \end{enumerate} then $t_f< t_g$.
\end{lem}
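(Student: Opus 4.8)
The plan is to exploit the elementary sign behavior of a monic polynomial beyond its largest real zero. First I would record the basic fact that underlies both cases: if $p$ is a monic real polynomial admitting a real zero, with largest real zero $t_p$, then $p(t)>0$ for every $t>t_p$ and $p(t_p)=0$. Indeed, $p$ has no zero on $(t_p,\infty)$ by maximality of $t_p$, so $p$ keeps a constant sign there; since $p$ is monic we have $p(t)\to+\infty$ as $t\to+\infty$, forcing that sign to be positive. Consequently $p(t)\geq 0$ for all $t\geq t_p$, with equality only at $t_p$. I will apply this observation to both $f$ and $g$, whose largest real zeros $t_f$ and $t_g$ are well defined precisely because of the hypothesis that each polynomial has at least one real zero.

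For the first hypothesis, I would evaluate $h$ at $t_f$. Since $f(t_f)=0$, we get $h(t_f)=f(t_f)-g(t_f)=-g(t_f)$, so the assumption $h(t_f)>0$ is equivalent to $g(t_f)<0$. By the sign fact above applied to $g$, we have $g(t)\geq 0$ for every $t\geq t_g$; hence $g(t_f)<0$ rules out $t_f\geq t_g$, and we conclude $t_f<t_g$.

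For the second hypothesis, I would argue directly on the interval $[t_g,\infty)$. For every $t\geq t_g$ the sign fact gives $g(t)\geq 0$, and the assumption gives $h(t)=f(t)-g(t)>0$; combining these yields $f(t)>g(t)\geq 0$, so $f$ is strictly positive on all of $[t_g,\infty)$. In particular $f$ has no real zero in this interval, whence its largest real zero $t_f$ must satisfy $t_f<t_g$.

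The argument is short and I do not anticipate a genuine obstacle; the only point requiring care is the justification that a monic polynomial is positive to the right of its largest real zero, which I single out as the backbone of the lemma and which relies jointly on the absence of further real zeros and on the leading behavior at $+\infty$. I would state this auxiliary fact explicitly at the outset so that both cases reduce to a one-line comparison of signs.
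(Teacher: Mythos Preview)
Your proof is correct and follows essentially the same approach as the paper: both arguments hinge on the fact that a monic polynomial is positive beyond its largest real zero, then reduce each case to a one-line sign comparison. Your write-up is slightly more explicit in isolating this auxiliary fact at the outset, but the substance is identical.
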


\begin{proof}
    \begin{enumerate}
        \item In this case, we have $g(t_f)=-f(t_f)+g(t_f)=-h(t_f)< 0$. Since $g(t)\to+\infty$ when $t\to+\infty$, we have $t_g> t_f$.
        \item We know that $g(t)>0$ for $t>t_g$ and by hypothesis $f(t)>g(t)$ for all $t>t_g$. So $f(t)>0$ if $t\geq t_g$, hence $t_f< t_g$.
    \end{enumerate}
\end{proof}

The authors in \cite{chien2023numerical} note that the identity $(a+b)^2=a^2+2ab+b^2$ implies:
\begin{lem}\label{id remar}
    For all $(a_1,\cdots,a_6)\in \R^6$, with $a_7=a_1$ we have:
    \begin{equation}\label{eq id remar}
        2\left(\sum_{1<\leq i<j\leq6,2\leq j-i\leq 5}a_i^2a_j^2+\sum_{k=1}^6a_k^2a_{k+1}^2\right)=\left(\sum_{i=1}^6a_i^2\right)^2-\sum_{i=1}^6a_i^4.
    \end{equation}
\end{lem}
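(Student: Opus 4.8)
The plan is to turn the identity into an elementary counting statement about the fifteen products $a_i^2a_j^2$ with $1\le i<j\le 6$. First I would expand the right-hand side with the square formula quoted just before the lemma: since $\left(\sum_{i=1}^6 a_i^2\right)^2=\sum_{i=1}^6 a_i^4+2\sum_{1\le i<j\le 6}a_i^2a_j^2$, the right-hand side of \eqref{eq id remar} is exactly $2\sum_{1\le i<j\le 6}a_i^2a_j^2$. Cancelling the common factor $2$, the lemma becomes the assertion that the bracketed left-hand side reproduces the full pairwise sum $\sum_{1\le i<j\le 6}a_i^2a_j^2$.

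Next I would sort the unordered pairs $\{i,j\}$ by their difference $d=j-i\in\{1,2,3,4,5\}$, so that $\sum_{1\le i<j\le 6}a_i^2a_j^2$ decomposes into its layers $d=1,\dots,5$ (of sizes $5,4,3,2,1$). The first sum in \eqref{eq id remar}, taken over $2\le j-i\le 5$, is precisely the union of the layers $d=2,3,4,5$; the remaining layer $d=1$ consists of the adjacent products $a_1^2a_2^2,\dots,a_5^2a_6^2$, which is what the second sum $\sum_{k=1}^6 a_k^2a_{k+1}^2$ is meant to supply once the convention $a_7=a_1$ is taken into account. Matching these two lists against the full collection of fifteen products is then a finite verification that I would carry out layer by layer.

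The point deserving the most care, and the one I would single out as the main obstacle, is the bookkeeping of the wraparound pair $\{1,6\}$: it occupies the top layer $d=5$ of the first sum and it is also produced by the $k=6$ term $a_6^2a_7^2=a_6^2a_1^2$ of the cyclic second sum, so its multiplicity in the combined left-hand side must be pinned down exactly, whereas every pair with $d\in\{1,2,3,4\}$ is contributed by just one of the two sums and needs no attention. Once this single wraparound term has been accounted for correctly and the layer-by-layer matching is checked, \eqref{eq id remar} follows by restoring the factor $2$ removed in the first step.
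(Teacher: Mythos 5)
Your reduction is the right idea, and it is the same one the paper (following Chien et al.) relies on: expanding $\left(\sum_{i=1}^6 a_i^2\right)^2$ via $(a+b)^2=a^2+2ab+b^2$ turns \eqref{eq id remar} into the purely combinatorial claim that the bracketed left-hand side lists each of the fifteen products $a_i^2a_j^2$, $1\le i<j\le 6$, exactly once. But the crux you flagged --- the multiplicity of the wraparound pair $\{1,6\}$ --- is precisely where the plan breaks, and it cannot be repaired in the way your last sentence asserts. With the range $2\le j-i\le 5$ as written, the product $a_1^2a_6^2$ occurs \emph{twice} in the bracket (once in the layer $d=5$ of the first sum and once as the $k=6$ term $a_6^2a_7^2$ of the cyclic sum), so the bracket equals $\sum_{1\le i<j\le 6}a_i^2a_j^2+a_1^2a_6^2$ and the two sides of \eqref{eq id remar} differ by $2a_1^2a_6^2$. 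The identity as literally stated is false: for $(a_1,\dots,a_6)=(1,0,0,0,0,1)$ the left-hand side equals $4$ while the right-hand side equals $2$. So ``accounting for the wraparound term'' does not yield \eqref{eq id remar}; it refutes it.

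What the bookkeeping actually shows is that the stated range is a typo for $2\le j-i\le 4$. With that range the first sum is exactly the nine-term sum $\left(a_1^2a_3^2+\cdots+a_4^2a_6^2\right)$ occurring as the coefficient of $x$ in $P_{\sigma,a}$ (for $\sigma=\mathrm{id}$), which is how the lemma is invoked in the proof of Proposition \ref{135}: the layers $d=2,3,4$ (nine pairs) are supplied by the first sum, the layer $d=1$ (five pairs) together with the single wraparound pair $\{1,6\}$ by the cyclic sum, every one of the fifteen pairs occurs exactly once, and the identity holds. Your proof becomes correct once you make this correction explicit and then carry out the fifteen-pair matching for the corrected range; as submitted, its final step claims a conclusion that the very bookkeeping you singled out as the main obstacle would disprove.
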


Thus, we have the following proposition from \cite{chien2023numerical}:
\begin{prop}\label{135}
    Let $a\in\mathcal{A}_6$ and $\sigma,\mu\in S_6$ with $\{\sigma(1),\sigma(3),\sigma(5)\}=\{\mu(1),\mu(3),\mu(5)\}$. $w(A_\sigma)\leq w(A_\mu)$ if and only if $\sum_{i=1}^6 a_{\sigma(i)}^2a_{\sigma(i+1)}^2\leq \sum_{i=1}^6 a_{\mu(i)}^2a_{\mu(i+1)}^2$ where we set $a_7=a_1$.
\end{prop}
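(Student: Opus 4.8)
The plan is to pin down exactly which coefficients of the cubic $P_{\sigma,a}$ can change when $\sigma$ is replaced by $\mu$, and then to feed the resulting difference into Lemma~\ref{trivial}. First I would observe that the leading coefficient and the coefficient of $x^2$, namely $-(a_1^2+\cdots+a_6^2)$, do not involve $\sigma$ at all. Next, under the hypothesis $\{\sigma(1),\sigma(3),\sigma(5)\}=\{\mu(1),\mu(3),\mu(5)\}$ — which, because positions $1,\dots,6$ split into odd and even slots, is equivalent to $\{\sigma(2),\sigma(4),\sigma(6)\}=\{\mu(2),\mu(4),\mu(6)\}$ — the constant term agrees for $\sigma$ and $\mu$: indeed $a_{\sigma(1)}^2a_{\sigma(3)}^2a_{\sigma(5)}^2$ and $a_{\sigma(2)}^2a_{\sigma(4)}^2a_{\sigma(6)}^2$ are products of squares taken over those two fixed sets, hence unchanged, while $2a_1a_2a_3a_4a_5a_6$ is visibly independent of the permutation. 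So the only coefficient that can distinguish $P_{\sigma,a}$ from $P_{\mu,a}$ is the coefficient of $x$.

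The second step is to analyse that coefficient via Lemma~\ref{id remar}. Writing $Q_\sigma:=a_{\sigma(1)}^2a_{\sigma(3)}^2+\cdots+a_{\sigma(4)}^2a_{\sigma(6)}^2$, one checks that these nine products range over precisely the pairs of positions at cyclic distance at least two, while the six cyclically adjacent pairs (including the wrap-around $a_{\sigma(6)}^2a_{\sigma(1)}^2$) make up $C_\sigma:=\sum_{i=1}^6 a_{\sigma(i)}^2a_{\sigma(i+1)}^2$, indices read cyclically. Since the sum over \emph{all} pairs, $\tfrac12\big((\sum_i a_i^2)^2-\sum_i a_i^4\big)$, depends only on the multiset of weights and not on $\sigma$, applying the identity of Lemma~\ref{id remar} (with $a_{\sigma(k)}$ in place of $a_k$, which is legitimate as $\sigma$ merely relabels) gives $Q_\sigma=T-C_\sigma$ for a constant $T$ independent of $\sigma$. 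Consequently the sole $\sigma$-dependence of $P_{\sigma,a}$ is carried by $-C_\sigma$, and $h(x):=P_{\sigma,a}(x)-P_{\mu,a}(x)=(C_\mu-C_\sigma)\,x$.

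Finally I would invoke Lemma~\ref{trivial}. The relevant largest zeros are those of $P_{\sigma,a}$ and $P_{\mu,a}$; after the substitution $x^2\mapsto x$ they equal $4w(A_\sigma)^2$ and $4w(A_\mu)^2$, hence are positive since $a_6>0$. If $C_\sigma<C_\mu$ then $h(x)=(C_\mu-C_\sigma)x>0$ on $(0,\infty)$, so $h$ is positive at the largest zero of $P_{\sigma,a}$ and case (1) of Lemma~\ref{trivial} yields $w(A_\sigma)<w(A_\mu)$; if $C_\sigma=C_\mu$ then $h\equiv0$ and the largest zeros coincide; and $C_\sigma>C_\mu$ gives $w(A_\sigma)>w(A_\mu)$ by the same argument with $\sigma$ and $\mu$ interchanged. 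This trichotomy is exactly the claimed equivalence. The step demanding the most care is the bookkeeping of the second paragraph — confirming that the nine cross terms of $Q_\sigma$ are the non-adjacent position pairs and that the adjacent pairs assemble into $C_\sigma$ — together with the verification that the constant term is genuinely $\sigma$-invariant under the hypothesis; once these are settled, the comparison is immediate.
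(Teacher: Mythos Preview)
Your proposal is correct and follows essentially the same route as the paper: reduce $P_{\sigma,a}-P_{\mu,a}$ to a single linear term via the hypothesis on $\{\sigma(1),\sigma(3),\sigma(5)\}$, use Lemma~\ref{id remar} to rewrite the surviving coefficient as $C_\mu-C_\sigma$, and conclude with Lemma~\ref{trivial} together with positivity of the largest root. The only cosmetic difference is that you apply Lemma~\ref{id remar} before invoking Lemma~\ref{trivial}, whereas the paper does it afterwards, and you spell out the invariance of the constant term more explicitly than the paper does.
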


\begin{proof}
    Let's define $h$ by $h(x)=P_{\sigma,a}(x)-P_{\mu,a}(x)$. By hypothesis over $ \sigma$ and $\mu$, we have:
    \begin{equation*}
        h(x)=\left[\left(a_{\sigma(1)}^2a_{\sigma(3)}^2+\cdots +a_{\sigma(4)}^2a_{\sigma(6)}^2\right)-\left(a_{\mu(1)}^2a_{\mu(3)}^2+\cdots +a_{\mu(4)}^2a_{\mu(6)}^2\right)\right]x.
    \end{equation*}
    We know that the largest zeros of $P_{\sigma,a}$ and $P_{\mu,a}$ are strictly positive. Indeed, they are the squares of the largest zeros of $\mathcal{P}_{\sigma,a}$ and $\mathcal{P}_{\mu,a}$ respectively, which are strictly positive because they are also the numerical radii of $2A_\sigma$ and $2A_\mu$ ($a\neq 0_{\C^n}$). So, by Lemma \ref{trivial}, $w(A_\sigma)\leq w(A_\mu)$ if and only if, $\left(a_{\sigma(1)}^2a_{\sigma(3)}^2+\cdots +a_{\sigma(4)}^2a_{\sigma(6)}^2\right)\geq\left(a_{\mu(1)}^2a_{\mu(3)}^2+\cdots +a_{\mu(4)}^2a_{\mu(6)}^2\right)$.
    
    Thanks to Lemma \ref{id remar}, one can see that for all $a\in\mathcal{A}_n$ and $\sigma,\mu\in S_6$,
    \begin{equation*}
    \left(a_{\sigma(1)}^2a_{\sigma(3)}^2+\cdots +a_{\sigma(4)}^2a_{\sigma(6)}^2\right)\geq\left(a_{\mu(1)}^2a_{\mu(3)}^2+\cdots +a_{\mu(4)}^2a_{\mu(6)}^2\right)\Leftrightarrow \sum_{i=1}^6 a_{\sigma(i)}^2a_{\sigma(i+1)}^2\leq \sum_{i=1}^6 a_{\mu(i)}^2a_{\mu(i+1)}^2
    \end{equation*}
    because the right member of the equality \ref{eq id remar} does not depend on the application of a permutation $\sigma$ on the weights $a_1,\cdots,a_6$. This concludes the proof.
\end{proof}

Using an idea developed in \cite{chien2023numerical}, we are going to gather the 60 permutations in $S_6/H_6$ into 10 families with 6 permutations in each, corresponding to the 10 possible values of $\{\sigma(1),\sigma(3),\sigma(5)\}$ for $\sigma\in S_6/H_6$. These families are listed explicitly in the proof of the following proposition. Then, using Proposition \ref{135}, it will be quite easy to determine for each of the 10 families which permutation minimizes the numerical radius within the family.

The 10 families will be ordered according to the following order for the value $\{\sigma(1),\sigma(3),\sigma(5)\}$ with $\sigma\in S_6/H_6$:
\begin{equation*}
    \{1,2,3\},\{1,2,4\},\{1,2,5\},\{1,2,6\},\{1,3,4\},\{1,3,5\},\{1,3,6\},\{1,4,5\},\{1,4,6\},\{1,5,6\}.
\end{equation*}

\begin{prop}\label{Min des familles}
    Let $a\in\mathcal{A}_6$. The following permutations of $S_6$ are those which minimize the numerical radius within each of their families, respecting the precedent order of families:

$\s{5}{3}{4}{2}{6}$, 
$\s{5}{4}{3}{2}{6}$, 
$\s{4}{5}{3}{2}{6}$, 
$\s{4}{6}{3}{2}{5}$,
$\s{5}{4}{2}{3}{6}$, 

$\s{4}{5}{2}{3}{6}$, 
$\s{4}{6}{2}{3}{5}$, 
$\s{3}{5}{2}{4}{6}$, 
$\s{3}{6}{2}{4}{5}$, 
$\s{3}{6}{2}{5}{4}$.
\end{prop}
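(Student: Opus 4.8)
The plan is to reduce the entire statement to one rearrangement–inequality computation, performed once for each of the ten families. By Proposition~\ref{135}, two permutations $\sigma,\mu$ lying in the same family (i.e.\ with $\{\sigma(1),\sigma(3),\sigma(5)\}=\{\mu(1),\mu(3),\mu(5)\}$) satisfy $w(A_\sigma)\le w(A_\mu)$ if and only if $C(\sigma)\le C(\mu)$, where $C(\sigma):=\sum_{i=1}^6 a_{\sigma(i)}^2a_{\sigma(i+1)}^2$ with $a_7=a_1$. Thus within each family minimizing the numerical radius is exactly minimizing the cyclic quantity $C$, and the proposition reduces to pinning down, in each of the ten families, the permutation that minimizes $C$.

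Fix a family and sort the three odd-position squared weights as $s_1<s_2<s_3$ and the complementary even-position squared weights as $t_1<t_2<t_3$; for the normalized representatives we have $\sigma(1)=1$, so $s_1=a_1^2$ occupies position $1$. Viewing $1,\dots,6$ as the vertices of a hexagon, $C(\sigma)$ is the sum over the six edges of the products of their endpoint weights, and each edge joins an odd to an even vertex. Grouping the six terms by their even endpoint gives $C(\sigma)=\sum_{j\in\{2,4,6\}}(\text{weight at }j)\cdot(\text{sum of the two odd neighbours of }j)$. The decisive observation is that any two odd vertices of the hexagon share exactly one common even neighbour, so the three neighbour-sums are precisely the three pairwise sums $s_1+s_2<s_1+s_3<s_2+s_3$, whatever the placement of $s_1,s_2,s_3$. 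Minimizing $C$ therefore means distributing $t_1,t_2,t_3$ over these three fixed sums to make the total smallest, which by the rearrangement inequality forces the unique anti-sorted pairing: $t_3$ with $s_1+s_2$, $t_2$ with $s_1+s_3$, and $t_1$ with $s_2+s_3$. All the inequalities are strict because the $a_k$ are strictly increasing, so the minimizing class in $S_6/H_6$ is unique.

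Carrying this optimal pairing back to the hexagon and normalizing by $\sigma(1)=1$ and $\sigma(2)<\sigma(6)$ produces, in every family, exactly one representative, namely
\[
\sigma=\compactpmatrix{1&2&3&4&5&6\\ \mathrm{idx}(s_1)&\mathrm{idx}(t_2)&\mathrm{idx}(s_3)&\mathrm{idx}(t_1)&\mathrm{idx}(s_2)&\mathrm{idx}(t_3)},
\]
where $\mathrm{idx}$ returns the original index of a squared weight; the normalization $\sigma(2)<\sigma(6)$ holds automatically since positions $2$ and $6$ carry $t_2<t_3$. It then only remains to evaluate this formula for each of the ten admissible sets $\{\sigma(1),\sigma(3),\sigma(5)\}$, in the order fixed before the statement, and to check that the resulting permutations are exactly those listed. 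The conceptual content is entirely contained in the common-neighbour remark and the rearrangement step; the one place where care is genuinely needed --- and where a slip is most likely --- is the bookkeeping, i.e.\ correctly reading off $s_1,s_2,s_3$ and $t_1,t_2,t_3$ family by family and confirming that the representative above is indeed $H_6$-normalized rather than a rotated or reflected copy.
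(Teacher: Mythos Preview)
Your proof is correct and takes a genuinely different, more structural route than the paper. Both arguments begin with Proposition~\ref{135}, reducing the question to minimizing the cyclic sum $C(\sigma)=\sum_{i}a_{\sigma(i)}^2a_{\sigma(i+1)}^2$ within each family; from there the paper proceeds by brute force, computing and factoring the fifty differences $S(\sigma,\mu)$ and displaying them in ten tables, while you exploit the bipartite structure of the hexagon. Your key observation --- that the three even vertices see precisely the three pairwise sums $s_1+s_2<s_1+s_3<s_2+s_3$ of the odd-position squared weights regardless of how the $s_k$ are placed --- turns the optimization into a single application of the rearrangement inequality and yields the closed formula $\sigma=(\mathrm{idx}(s_1),\mathrm{idx}(t_2),\mathrm{idx}(s_3),\mathrm{idx}(t_1),\mathrm{idx}(s_2),\mathrm{idx}(t_3))$ for the normalized minimizer, from which all ten listed permutations drop out uniformly. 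The paper's approach buys complete explicitness (every inequality is visibly a product of two factors of determined sign), whereas yours buys conceptual economy and a uniform explanation of \emph{why} the minimizers have the shape they do.

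One small point you leave implicit: strict rearrangement gives uniqueness of the \emph{pairing} (which $t_j$ meets which pairwise sum $s_k+s_\ell$), but several permutations realize that same pairing, one for each placement of the $s_k$ at positions $1,3,5$. That these all lie in a single $H_6$-class follows because the parity-preserving subgroup of $H_6$ acts as the full symmetric group on the odd positions while preserving the adjacency structure (and hence the anti-sorted pairing); you might add one sentence to that effect rather than relying solely on the normalized formula to carry the uniqueness claim.
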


\begin{proof}
    Like in the proof of Proposition 6.5 in \cite{chien2023numerical}, for each permutation $\sigma$ we claim it is the one that minimizes $w(A_\sigma)$, we compute the value of $\sum_{i=1}^6\left(a_{\sigma(i)}^2a_{\sigma(i+1)}^2-a_{\mu(i)}^2a_{\mu(i+1)}^2\right)$ for every $\mu$ in the family of $ \sigma$. We will present the results of these simple computations in 10 tables (one per family) for easy reading. We set for a fixed  $a\in \mathcal{A}_6$,
    \begin{equation*}
        S(\sigma,\mu):=\sum_{i=1}^6\left(a_{\sigma(i)}^2a_{\sigma(i+1)}^2-a_{\mu(i)}^2a_{\mu(i+1)}^2\right).
    \end{equation*}

In the tables below, we will use the following notations. We define $r_1,\cdots,r_6$ by the equality $a_i^2=\sum_{k=1}^ir_k$ for all $i\in\llbracket1,6\rrbracket$. It is the same as asking $r_i=a_i^2-a_{i-1}^2$ for all $i\in\llbracket 1,6\rrbracket$ with $a_0=0.$ Because we have $0\leq a_1<\cdots<a_6$, we get $r_i>0$ for all $i\geq2$.

In these tables, in cases where $S(\sigma,\mu)$ is written as a sum of three terms, the decomposition of $S(\sigma,\mu)$ is obtained by replacing the differences in the sum by the corresponding values written in terms of the $r_i$'s.

\begin{center}
	\begin{tabular}{|c|c|c|}\hline
		$\mu$ & $S(\sigma,\mu)$ with $\sigma=\s{5}{3}{4}{2}{6}$&Decomposition of $S(\sigma,\mu)$\\\hline
		$\s{4}{3}{5}{2}{6}$&$\di{5}{4}\di{1}{2}$&\\\hline
		$\s{4}{2}{6}{3}{5}$&$\di{6}{4}\di{1}{3}$&\\\hline
		$\s{4}{2}{5}{3}{6}$&\scalebox{0.8}{$a_5^2\di{1}{2}+a_4^2\di{3}{1}+a_6^2\di{2}{3}$}&\scalebox{0.8}{$r_2\di{4}{5}+r_3\di{4}{6}$}\\\hline
		$\s{4}{3}{6}{2}{5}$&\scalebox{0.8}{$a_1^2\di{6}{4}+a_3^2\di{5}{6}+a_2^2\di{4}{5}$}&\scalebox{0.8}{$r_5\di{1}{2}+r_6\di{1}{3}$}\\\hline
		$\s{5}{2}{4}{3}{6}$&$\di{6}{5}\di{2}{3}$&\\\hline
	\end{tabular}
\end{center}

\begin{center}
	\begin{tabular}{|c|c|c|}\hline
		$\mu$ & $S(\sigma,\mu)$ with $\sigma=\s{5}{4}{3}{2}{6}$&Decomposition of $S(\sigma,\mu)$\\\hline
		$\s{3}{4}{6}{2}{5}$&\scalebox{0.8}{$a_1^2\di{6}{3}+a_4^2\di{5}{6}+a_2^2\di{3}{5}$}&\scalebox{0.8}{$(r_4+r_5)\di{1}{2}+r_6\di{1}{4}$}\\\hline
		$\s{3}{4}{5}{2}{6}$&$\di{5}{3}\di{1}{2}$&\\\hline
		$\s{5}{2}{3}{4}{6}$&$\di{6}{5}\di{2}{4}$&\\\hline
		$\s{3}{2}{5}{4}{6}$&\scalebox{0.8}{$a_5^2\di{1}{2}+a_3^2\di{4}{1}+a_6^2\di{2}{4}$}&\scalebox{0.8}{$r_2\di{3}{5}+(r_3+r_4)\di{3}{6}$}\\\hline
		$\s{3}{2}{6}{4}{5}$&$\di{6}{3}\di{1}{4}$&\\\hline
	\end{tabular}
\end{center}

\begin{center}
	\begin{tabular}{|c|c|c|}\hline
		$\mu$ & $S(\sigma,\mu)$ with $\sigma=\s{4}{5}{3}{2}{6}$&Decomposition of $S(\sigma,\mu)$\\\hline
		$\s{3}{5}{6}{2}{4}$&\scalebox{0.8}{$a_1^2\di{6}{3}+a_5^2\di{4}{6}+a_2^2\di{3}{4}$}&\scalebox{0.8}{$r_4\di{1}{2}+(r_5+r_6)\di{1}{5}$}\\\hline
		$\s{4}{2}{3}{5}{6}$&$\di{6}{4}\di{2}{5}$&\\\hline
		$\s{3}{2}{6}{5}{4}$&$\di{6}{3}\di{1}{5}$&\\\hline
		$\s{3}{2}{4}{5}{6}$&\scalebox{0.8}{$a_4^2\di{1}{2}+a_3^2\di{5}{1}+a_6^2\di{2}{5}$}&\scalebox{0.8}{$r_2\di{3}{4}+(r_3+r_4+r_5)\di{3}{6}$}\\\hline
		$\s{3}{5}{4}{2}{6}$&$\di{4}{3}\di{1}{2}$&\\\hline
	\end{tabular}
\end{center}

\begin{center}
	\begin{tabular}{|c|c|c|}\hline
		$\mu$ & $S(\sigma,\mu)$ with $\sigma=\s{4}{6}{3}{2}{5}$&Decomposition of $S(\sigma,\mu)$\\\hline
		$\s{4}{2}{3}{6}{5}$&$\di{5}{4}\di{2}{6}$&\\\hline
		$\s{3}{6}{4}{2}{5}$&$\di{4}{3}\di{1}{2}$&\\\hline
		$\s{3}{6}{5}{2}{4}$&\scalebox{0.8}{$a_1^2\di{5}{3}+a_6^2\di{4}{5}+a_2^2\di{3}{4}$}&\scalebox{0.8}{$r_4\di{1}{2}+r_5\di{1}{6}$}\\\hline
		$\s{3}{2}{4}{6}{5}$&\scalebox{0.8}{$a_4^2\di{1}{2}+a_3^2\di{6}{1}+a_5^2\di{2}{6}$}&\scalebox{0.8}{$r_2\di{3}{4}+(r_3+r_4+r_5+r_6)\di{3}{5}$}\\\hline
		$\s{3}{2}{5}{6}{4}$&$\di{5}{3}\di{1}{6}$&\\\hline
	\end{tabular}
\end{center}

\begin{center}
	\begin{tabular}{|c|c|c|}\hline
		$\mu$ & $S(\sigma,\mu)$ with $\sigma=\s{5}{4}{2}{3}{6}$&Decomposition of $S(\sigma,\mu)$\\\hline
		$\s{2}{4}{6}{3}{5}$&\scalebox{0.8}{$a_1^2\di{6}{2}+a_4^2\di{5}{6}+a_3^2\di{2}{5}$}&\scalebox{0.8}{$(r_3+r_4+r_5)\di{1}{3}+r_6\di{1}{4}$}\\\hline
		$\s{5}{3}{2}{4}{6}$&$\di{6}{5}\di{3}{4}$&\\\hline
		$\s{2}{3}{6}{4}{5}$&$\di{6}{2}\di{1}{4}$&\\\hline
		$\s{2}{3}{5}{4}{6}$&\scalebox{0.8}{$a_5^2\di{1}{3}+a_2^2\di{4}{1}+a_6^2\di{3}{4}$}&\scalebox{0.8}{$(r_2+r_3)\di{2}{5}+r_4\di{2}{6}$}\\\hline
		$\s{2}{4}{5}{3}{6}$&$\di{5}{2}\di{1}{3}$&\\\hline
	\end{tabular}
\end{center}

\begin{center}
	\begin{tabular}{|c|c|c|}\hline
		$\mu$ & $S(\sigma,\mu)$ with $\sigma=\s{4}{5}{2}{3}{6}$&Decomposition of $S(\sigma,\mu)$\\\hline
		$\s{2}{5}{4}{3}{6}$&$\di{4}{2}\di{1}{3}$&\\\hline
		$\s{2}{3}{4}{5}{6}$&\scalebox{0.8}{$a_4^2\di{1}{3}+a_2^2\di{5}{1}+a_6^2\di{3}{5}$}&\scalebox{0.8}{$(r_2+r_3)\di{2}{4}+(r_4+r_5)\di{2}{6}$}\\\hline
		$\s{4}{3}{2}{5}{6}$&$\di{6}{4}\di{3}{5}$&\\\hline
		$\s{2}{3}{6}{5}{4}$&$\di{6}{2}\di{1}{5}$&\\\hline
		$\s{2}{5}{6}{3}{4}$&\scalebox{0.8}{$a_1^2\di{6}{2}+a_5^2\di{4}{6}+a_3^2\di{2}{4}$}&\scalebox{0.8}{$(r_3+r_4)\di{1}{3}+(r_5+r_6)\di{1}{5}$}\\\hline
	\end{tabular}
\end{center}

\begin{center}
	\begin{tabular}{|c|c|c|}\hline
		$\mu$ & $S(\sigma,\mu)$ with $\sigma=\s{4}{6}{2}{3}{5}$&Decomposition of $S(\sigma,\mu)$\\\hline
		$\s{4}{3}{2}{6}{5}$&$\di{5}{4}\di{3}{6}$&\\\hline
		$\s{2}{3}{4}{6}{5}$&\scalebox{0.8}{$a_4^2\di{1}{3}+a_2^2\di{6}{1}+a_5^2\di{3}{6}$}&\scalebox{0.8}{$(r_2+r_3)\di{2}{4}+(r_4+r_5+r_6)\di{2}{5}$}\\\hline
		$\s{2}{6}{5}{3}{4}$&\scalebox{0.8}{$a_1^2\di{5}{2}+a_6^2\di{4}{5}+a_3^2\di{2}{4}$}&\scalebox{0.8}{$(r_3+r_4)\di{1}{3}+r_5\di{1}{6}$}\\\hline
		$\s{2}{6}{4}{3}{5}$&$\di{4}{2}\di{1}{3}$&\\\hline
		$\s{2}{3}{5}{6}{4}$&$\di{5}{2}\di{1}{6}$&\\\hline
	\end{tabular}
\end{center}

\begin{center}
	\begin{tabular}{|c|c|c|}\hline
		$\mu$ & $S(\sigma,\mu)$ with $\sigma=\s{3}{5}{2}{4}{6}$&Decomposition of $S(\sigma,\mu)$\\\hline
		$\s{2}{5}{3}{4}{6}$&$\di{3}{2}\di{1}{4}$&\\\hline
		$\s{2}{5}{6}{4}{3}$&\scalebox{0.8}{$a_1^2\di{6}{2}+a_5^2\di{3}{6}+a_4^2\di{2}{3}$}&\scalebox{0.8}{$r_3\di{1}{4}+(r_4+r_5+r_6)\di{1}{5}$}\\\hline
		$\s{2}{4}{3}{5}{6}$&\scalebox{0.8}{$a_3^2\di{1}{4}+a_2^2\di{5}{1}+a_6^2\di{4}{5}$}&\scalebox{0.8}{$(r_2+r_3+r_4)\di{2}{3}+r_5\di{2}{6}$}\\\hline
		$\s{2}{4}{6}{5}{3}$&$\di{6}{2}\di{1}{5}$&\\\hline
		$\s{3}{4}{2}{5}{6}$&$\di{5}{4}\di{3}{6}$&\\\hline
	\end{tabular}
\end{center}

\begin{center}
	\begin{tabular}{|c|c|c|}\hline
		$\mu$ & $S(\sigma,\mu)$ with $\sigma=\s{3}{6}{2}{4}{5}$&Decomposition of $S(\sigma,\mu)$\\\hline
		$\s{2}{4}{5}{6}{3}$&$\di{5}{2}\di{1}{6}$&\\\hline
		$\s{2}{6}{5}{4}{3}$&\scalebox{0.8}{$a_1^2\di{5}{2}+a_6^2\di{3}{5}+a_4^2\di{2}{3}$}&\scalebox{0.8}{$r_3\di{1}{4}+(r_4+r_5)\di{1}{6}$}\\\hline
		$\s{2}{4}{3}{6}{5}$&\scalebox{0.8}{$a_3^2\di{1}{4}+a_2^2\di{6}{1}+a_5^2\di{4}{6}$}&\scalebox{0.8}{$(r_2+r_3+r_4)\di{2}{3}+(r_5+r_6)\di{2}{5}$}\\\hline
		$\s{3}{4}{2}{6}{5}$&$\di{6}{4}\di{3}{5}$&\\\hline
		$\s{2}{6}{3}{4}{5}$&$\di{3}{2}\di{1}{4}$&\\\hline
	\end{tabular}
\end{center}

\begin{center}
	\begin{tabular}{|c|c|c|}\hline
		$\mu$ & $S(\sigma,\mu)$ with $\sigma=\s{3}{6}{2}{5}{4}$&Decomposition of $S(\sigma,\mu)$\\\hline
		$\s{2}{5}{4}{6}{3}$&$\di{4}{2}\di{1}{6}$&\\\hline
		$\s{3}{5}{2}{6}{4}$&$\di{6}{5}\di{3}{4}$&\\\hline
		$\s{2}{5}{3}{6}{4}$&\scalebox{0.8}{$a_3^2\di{1}{5}+a_2^2\di{6}{1}+a_4^2\di{5}{6}$}&\scalebox{0.8}{$(r_2+r_3+r_4+r_5)\di{2}{3}+r_6\di{2}{4}$}\\\hline
		$\s{2}{6}{4}{5}{3}$&\scalebox{0.8}{$a_1^2\di{4}{2}+a_6^2\di{3}{4}+a_5^2\di{2}{3}$}&\scalebox{0.8}{$r_3\di{1}{5}+r_4\di{1}{6}$}\\\hline
		$\s{2}{6}{3}{5}{4}$&$\di{3}{2}\di{1}{5}$&\\\hline
	\end{tabular}
\end{center}

    The result of each of these calculations is negative. So, by Proposition \ref{135}, the proof is complete.
\end{proof}

For an $a\in\mathcal{A}_n$ given, we now "just" have to compare the numerical radii given by these 10 permutations to find the permutation(s) that will minimize the latter.

\begin{prop}\label{delta}
    Let $a\in\mathcal{A}_6$ and $\sigma,\mu\in S_6$. Let
    \begin{equation*}
        \alpha:=\left(a_{\sigma(1)}^2a_{\sigma(3)}^2+\cdots +a_{\sigma(4)}^2a_{\sigma(6)}^2\right)-\left(a_{\mu(1)}^2a_{\mu(3)}^2+\cdots +a_{\mu(4)}^2a_{\mu(6)}^2\right)
    \end{equation*}
    and 
    \begin{equation*}
        \beta:=-a_{\sigma(1)}^2a_{\sigma(3)}^2a_{\sigma(5)}^2-a_{\sigma(2)}^2a_{\sigma(4)}^2a_{\sigma(6)}^2+a_{\mu(1)}^2a_{\mu(3)}^2a_{\mu(5)}^2+a_{\mu(2)}^2a_{\mu(4)}^2a_{\mu(6)}^2.
    \end{equation*}
    We have
    \begin{equation*}
        \Delta:=\left(\sum_{i=1}^6a_i^2\right)^2-3\left(a_{\sigma(1)}^2a_{\sigma(3)}^2+\cdots +a_{\sigma(4)}^2a_{\sigma(6)}^2\right)>0.
    \end{equation*}
    In addition, with
    \begin{equation*}
        x_M:=\frac{1}{3}\left(\sum_{i=1}^6a_i^2+\sqrt{\Delta}\right),
    \end{equation*}
    we have the following:
    \begin{enumerate}
        \item If $\alpha=0$:
        \begin{enumerate}
            \item if $\beta>0$, $w(A_\mu)>w(A_\sigma)$,
            \item if $\beta<0$, $w(A_\sigma)>w(A_\mu)$.
        \end{enumerate}
        \item If $\alpha<0$ and $\gamma:=-\beta/\alpha\leq x_M$, then $w(A_\sigma)>w(A_\mu)$.
        \item If $\alpha>0$ and $\gamma:=-\beta/\alpha\leq x_M$, then $w(A_\mu)>w(A_\sigma)$.
    \end{enumerate}
\end{prop}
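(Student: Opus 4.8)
The plan is to compare the largest roots of the cubics $P_{\sigma,a}$ and $P_{\mu,a}$; after the substitution $x^2\mapsto x$ these are $t_\sigma=4w(A_\sigma)^2$ and $t_\mu=4w(A_\mu)^2$, so that comparing $w(A_\sigma)$ and $w(A_\mu)$ amounts to comparing $t_\sigma$ and $t_\mu$. First I would record that the two cubics differ only in their degree-one and degree-zero coefficients: the leading coefficient and the coefficient of $x^2$ (namely $-\sum_i a_i^2$) do not depend on the permutation, and the term $2a_1a_2a_3a_4a_5a_6$ in the constant term is permutation-invariant. Hence $h(x):=P_{\sigma,a}(x)-P_{\mu,a}(x)=\alpha x+\beta$ is affine, and when $\alpha\neq 0$ it vanishes exactly at $\gamma=-\beta/\alpha$. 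The whole argument is then an application of Lemma~\ref{trivial} to this affine $h$; both cubics have real roots, so the lemma applies. The only subtlety is that we do not know $t_\sigma,t_\mu$ explicitly and must locate them relative to $\gamma$ by means of $x_M$.

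The key geometric input consists of $\Delta>0$, the identification of $x_M$ as the local minimum of $P_{\sigma,a}$, and the strict bound $t_\sigma>x_M$. Since $\RE(2A_\sigma)$ is Hermitian, its characteristic polynomial $\mathcal P_{\sigma,a}$ has only real roots, and because $\mathcal P_{\sigma,a}$ is a polynomial in $x^2$ these roots are symmetric about $0$; consequently $P_{\sigma,a}(y)$ has three real nonnegative roots $r_1\le r_2\le r_3$ with $r_3=t_\sigma$. Its derivative $P_{\sigma,a}'(y)=3y^2-2\big(\sum_i a_i^2\big)y+\big(a_{\sigma(1)}^2a_{\sigma(3)}^2+\cdots+a_{\sigma(4)}^2a_{\sigma(6)}^2\big)$ then has two real roots, so its discriminant $4\Delta$ is nonnegative, giving $\Delta\ge 0$; its larger root is exactly $x_M$, and as the leading coefficient is positive, $x_M$ is the local minimum. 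To upgrade to $\Delta>0$ I would invoke Perron--Frobenius: the top eigenvalue of the irreducible nonnegative matrix $\RE(2A_\sigma)$ is simple, so $r_3$ is a simple root of $P_{\sigma,a}$; in particular the three roots cannot all coincide, which rules out $\Delta=0$, since a triple root is the only real-rooted cubic configuration with vanishing discriminant of the derivative. The same simplicity forces $r_3>r_2$, and since the larger critical point $x_M$ lies in $[r_2,r_3]$ with $x_M=r_3$ only when $r_2=r_3$, we obtain $t_\sigma=r_3>x_M$.

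With these facts in hand the case analysis is short. If $\alpha=0$ then $h\equiv\beta$: when $\beta<0$, applying Lemma~\ref{trivial}(1) to $P_{\mu,a}-P_{\sigma,a}$ at $t_\mu$ gives $t_\mu<t_\sigma$, i.e. $w(A_\sigma)>w(A_\mu)$, and symmetrically $\beta>0$ gives $w(A_\mu)>w(A_\sigma)$. If $\alpha>0$ and $\gamma\le x_M$, then $h(t_\sigma)=\alpha(t_\sigma-\gamma)>0$ because $t_\sigma>x_M\ge\gamma$, so Lemma~\ref{trivial}(1) (with $f=P_{\sigma,a}$ and $g=P_{\mu,a}$) yields $t_\sigma<t_\mu$, i.e. $w(A_\mu)>w(A_\sigma)$. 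If $\alpha<0$ and $\gamma\le x_M$, then for every $x\ge t_\sigma$ we have $x>\gamma$ and hence $(P_{\mu,a}-P_{\sigma,a})(x)=-\alpha(x-\gamma)>0$; Lemma~\ref{trivial}(2) then gives $t_\mu<t_\sigma$, i.e. $w(A_\sigma)>w(A_\mu)$.

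The main obstacle is the pair of strict inequalities $\Delta>0$ and $t_\sigma>x_M$. Indeed, the quadratic form $\big(\sum_i a_i^2\big)^2-3\big(a_{\sigma(1)}^2a_{\sigma(3)}^2+\cdots+a_{\sigma(4)}^2a_{\sigma(6)}^2\big)$ is indefinite on all of $\R^6$, so $\Delta>0$ genuinely uses the nonnegativity of the weights (equivalently, the Hermitian structure of $\RE(2A_\sigma)$); the cleanest route I see is the real-rootedness together with the Perron--Frobenius simplicity of the top eigenvalue, rather than a direct algebraic estimate. Everything afterwards---identifying $x_M$ with the local minimum and running the three cases through Lemma~\ref{trivial}---is routine.
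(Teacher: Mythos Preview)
Your proof is correct and follows essentially the same route as the paper: write $h(x)=P_{\sigma,a}(x)-P_{\mu,a}(x)=\alpha x+\beta$, use the Hermitian structure of $\RE(2A_\sigma)$ together with Perron--Frobenius simplicity to get $\Delta>0$ and $t_\sigma>x_M$, and then feed the three cases into Lemma~\ref{trivial}. Your justification of the strict inequalities (ruling out a triple root to exclude $\Delta=0$, and noting $x_M\in[r_2,r_3)$ since $r_3$ is simple) is slightly more explicit than the paper's, but the argument is the same.
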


\begin{proof}
    If $\alpha=0$ and $\beta>0$, we have $P_{\sigma,a}(t)>P_{\mu,a}(t)$ for all $t>0$. Lemma \ref{trivial} gives the desired conclusion and similarly for $\alpha=0$ and $\beta<0$.

    We have:
    \begin{equation*}
        P'_{\sigma,a}(x)=3x^2-2\left(a_1^2+a_2^2+a_3^2+a_4^2+a_5^2+a_6^2\right) x+\left(a_{\sigma(1)}^2a_{\sigma(3)}^2+\cdots +a_{\sigma(4)}^2a_{\sigma(6)}^2\right).
    \end{equation*}
    The discriminant of this polynomial is $4\Delta$.

    We know that the roots of $P_{\sigma,a}$ are the squares of the roots of $\mathcal{P}_{\sigma,a}$, which are exactly the eigenvalues of 2$\RE(A_\sigma)$. The fact that $\RE(A_\sigma)$ is a non-negative irreducible matrix and the Perron-Froebenius theorem insures that the roots of $\mathcal{P}_{\sigma,a}$ are real and the largest root of $\mathcal{P}_{\sigma,a}$ is simple. Hence, because of the parity of $\mathcal{P}_{\sigma,a}$, there exists $\lambda_1\geq\lambda_2\geq\lambda_3\geq0$ such that the roots of $\mathcal{P}_{\sigma,a}$ are the $\pm\lambda_i, i=1,2,3$. Because $\lambda_1$ is a simple root of $\mathcal{P}_{\sigma,a}$, we have $\lambda_1>\lambda_2$. So $P_{\sigma,a}$ has at least two real roots, so it must therefore have three because it is a real polynomial of degree three. Thus, we must have $\Delta>0$, and $P'_{\sigma,a}$ has two real roots given by:
    \begin{equation*}
        x_m=\frac{1}{3}\left(\sum_{i=1}^6a_i^2-\sqrt{\Delta}\right)\text{ and }
        x_M=\frac{1}{3}\left(\sum_{i=1}^6a_i^2+\sqrt{\Delta}\right).
    \end{equation*}
    We also have $P_{\sigma,a}(x_M)<0$ because $P_{\sigma,a}$ has three real roots.
    
    So if we denote by $t_\sigma$ and $t_\mu$ respectively the largest zeros of $P_{\sigma,a}$ and $P_{\mu,a}$, we have $x_M<t_\sigma$. We now suppose $-\beta/\alpha\leq x_M$. Applying the first part of Lemma \ref{trivial} to $(f,g)=\left(P_{\sigma,a},P_{\mu,a}\right)$ if $\alpha>0$ or the second part of Lemma \ref{trivial} to $(f,g)=\left(P_{\mu,a}~ ,P_{\sigma,a}\right)$ if $\alpha<0$ gives the desired conclusion.
\end{proof}

\begin{prop}\label{loc rayon num}
    Let $a\in\mathcal{A}_6$, and let $\sigma\in S_6$. If we set $G_\sigma:=\max_{i\in\llbracket 1,n\rrbracket}\left\{a_{\sigma(i)}+a_{\sigma(i+1)}\right\}$  and $g_\sigma:=\min_{i\in\llbracket 1,n\rrbracket}\left\{a_{\sigma(i)}+a_{\sigma(i+1)}\right\}$ with $a_{\sigma(7)}=a_{\sigma(1)}$, then:
    \begin{equation*}
        \frac{g_\sigma}{2}\leq w(A_\sigma)\leq \frac{G_\sigma}{2}.
    \end{equation*}
\end{prop}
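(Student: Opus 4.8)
The plan is to reduce the whole statement to a two-sided spectral bound for the symmetric matrix $M:=2\RE(A_\sigma)$. By the discussion at the start of Section~2 (Proposition~3.3 of \cite{li2002numerical} together with Perron--Frobenius), $w(A_\sigma)$ is the largest eigenvalue of $\RE(A_\sigma)$, so $2w(A_\sigma)=\lambda_{\max}(M)$ and it suffices to prove $g_\sigma\le\lambda_{\max}(M)\le G_\sigma$; dividing by $2$ then yields the claim.

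First I would write $M$ out explicitly. It is a real symmetric non-negative matrix with zero diagonal, whose only non-zero entries are $M_{i,i+1}=M_{i+1,i}=a_{\sigma(i)}$ for $1\le i\le n-1$ together with $M_{1,n}=M_{n,1}=a_{\sigma(n)}$; in other words $M$ is the weighted adjacency matrix of a cycle on $n$ vertices. The key observation is that the $i$-th row sum of $M$ equals $a_{\sigma(i-1)}+a_{\sigma(i)}$ (indices taken cyclically with $a_{\sigma(n+1)}=a_{\sigma(1)}$), so as $i$ ranges over $\llbracket 1,n\rrbracket$ the row sums of $M$ are exactly the quantities $a_{\sigma(i)}+a_{\sigma(i+1)}$. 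Hence $G_\sigma$ is the maximal row sum of $M$ and $g_\sigma$ is the minimal row sum.

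For the upper bound I would use the Rayleigh characterisation $\lambda_{\max}(M)=\max_{\lVert x\rVert=1}\langle Mx,x\rangle$. Expanding gives $\langle Mx,x\rangle=\sum_{i=1}^n 2a_{\sigma(i)}\RE(\overline{x_i}\,x_{i+1})$ with $x_{n+1}=x_1$; bounding each summand by AM--GM via $2a_{\sigma(i)}\lvert x_i\rvert\lvert x_{i+1}\rvert\le a_{\sigma(i)}(\lvert x_i\rvert^2+\lvert x_{i+1}\rvert^2)$ and regrouping the coefficient of each $\lvert x_i\rvert^2$ produces $\langle Mx,x\rangle\le\sum_{i=1}^n(a_{\sigma(i-1)}+a_{\sigma(i)})\lvert x_i\rvert^2\le G_\sigma\lVert x\rVert^2$, whence $\lambda_{\max}(M)\le G_\sigma$. (This is just the maximal-row-sum bound for a non-negative matrix.) For the lower bound the cleanest route is the classical fact that the Perron root of a non-negative matrix is at least its minimal row sum; since $M$ is symmetric, non-negative and irreducible, Perron--Frobenius identifies that Perron root with $\lambda_{\max}(M)$, giving $\lambda_{\max}(M)\ge g_\sigma$. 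A self-contained alternative is to test the Rayleigh quotient against the normalised all-ones vector $u=\tfrac{1}{\sqrt n}\mathbf 1$, for which $\langle Mu,u\rangle$ equals the average of the row sums and is therefore $\ge g_\sigma$, so $\lambda_{\max}(M)\ge\langle Mu,u\rangle\ge g_\sigma$.

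The only real difficulty is bookkeeping: one must correctly read off the entries and the row sums of $M$ under the cyclic convention, and take care that the lower estimate genuinely uses the minimal-row-sum property of a non-negative matrix rather than a naive Gershgorin estimate, which would be vacuous here since $M$ has zero diagonal. Once $G_\sigma$ and $g_\sigma$ are recognised as the extreme row sums of $M$, both inequalities are immediate applications of Rayleigh quotients (or the standard non-negative-matrix eigenvalue bounds).
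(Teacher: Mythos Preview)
Your proof is correct and follows essentially the same approach as the paper: both reduce to bounding $\lambda_{\max}(2\RE(A_\sigma))$ by the extreme row sums of that matrix, with the paper phrasing the upper bound via Gershgorin disks and obtaining the lower bound from the minimal coordinate of the Perron eigenvector---which is precisely the standard proof of the ``Perron root $\ge$ minimal row sum'' fact you invoke. Your Rayleigh-quotient derivation of the upper bound and the all-ones-vector alternative for the lower bound are minor stylistic variations on the same idea.
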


\begin{proof}
    Let's denote here by $t_\sigma$ the largest zero of $\mathcal{P}_{\sigma,a}$ , then $t_\sigma$ is the largest eigenvalue of $2\RE (A_\sigma)$ and $t_\sigma=w(2A_\sigma)$. Also, we have:
    \begin{equation*}
        2\RE (A_\sigma)=\begin{pmatrix}
            0&a_{\sigma(1)}&&&a_{\sigma(n)}\\
            a_{\sigma(1)}&0&a_{\sigma(2)}&&\\
            &\ddots&\ddots&\ddots&\\
            &&\ddots&\ddots&a_{\sigma(n-1)}\\
            a_{\sigma(n)}&&&a_{\sigma(n-1)}&0
        \end{pmatrix}.
    \end{equation*}
    Hence its disks of Gershgorin are the $\left(\overline{D}(0,a_{\sigma(i)}+a_{\sigma(i+1)})\right)_{i\in\llbracket 1,n\rrbracket}$ with $a_{\sigma(7)}=a_{\sigma(1)}$. So if we set $G_\sigma$ as in the statement of the proposition, we know that the largest eigenvalue of $2\RE (A_\sigma)$ belongs to $]0,G_\sigma]$.

    On the other hand, we know by the Perron-Froebenius theorem that $t_\sigma$ is an eigenvalue of $2\RE(A_\sigma)$ with associated eigenvector $x=\left(x_i\right)_{i\in\llbracket 1,6\rrbracket}$ such that $x_i>0,\forall i$. If we denote $i_0\in\llbracket1,6\rrbracket$ the index such that $x_{i_0}=\min\{x_i\mid i=1,\cdots,6\}$, we have $x_{i_0}>0$ and since $t_\sigma x=2\RE(A_\sigma)x$ we have:
    \begin{equation*}
        t_\sigma x_{i_0}=a_{\sigma(i_0-1)}x_{\sigma(i_0-1)}+a_{\sigma(i_0)}x_{\sigma(i_0+1)}.
    \end{equation*}
    So by choice of $i_0$, $t_\sigma\geq a_{\sigma(i_0-1)}+a_{\sigma(i_0)}\geq g_\sigma$.
\end{proof}

\begin{rem}\label{simple impossible}
    Let $a\in\mathcal{A}_6,\sigma,\mu\in S_6$. An immediate consequence of Proposition \ref{loc rayon num} is that if $G_\sigma<g_\mu$, then $w(A_\sigma)<w(A_\mu)$ with the same notations as above. Unfortunately, one can never have the condition $G_\sigma<g_\mu$ irrespective of the $a\in \mathcal{A}_6$, and $\sigma,\mu \in S_6$ chosen. Indeed, if we set $a\in\mathcal{A}_6$ and consider $\sigma,\mu\in S_6$ that we will fix later, in order to perhaps have $G_\sigma<g_\mu$, our goal is to achieve $g_\mu$ as big as possible and $G_\sigma$ as small as possible.

    If two permutations $\sigma_1,\sigma_2\in S_6$ are in the same class in $S_6/H_6$, we have $g_{\sigma_1}=g_{\sigma_2}$ and $G_{\sigma_1}=G_{\sigma_2}$. So, by applying the permutations $c_6$ and $m_6$ as many times as needed, to seek the maximal $g_\mu$, we will consider what happens if $\mu$ is in the same class as $\tilde{\mu}$ in $S_6/H_6$ (so not necessary with $\tilde{\mu}(2)<\tilde{\mu}(6)$ as for $\mu$) with $\tilde{\mu}$ being one of the following type:
    \begin{enumerate}
        \item If $\tilde{\mu}=\s{2}{i}{j}{k}{l}$, then $g_\mu=a_1+a_2$.
        \item  If $\tilde{\mu}=\s{i}{2}{j}{k}{l}$, then $g_\mu=\min\left\{a_1+a_i,a_2+a_j,a_1+a_l\right\}$.
        \item If $\tilde{\mu}=\s{i}{j}{2}{k}{l}$, then $g_\mu=\min\left\{a_1+a_i,a_1+a_l,a_2+a_j,a_2+a_k\right\}$,
    \end{enumerate}
    with $\{i,j,k,l\}=\{3,4,5,6\}$. The case when $g_\mu$ is maximal is when $\tilde{\mu}=\s{6}{2}{4}{3}{5}$ and so $\mu=\s{5}{3}{4}{2}{6}$ and $g_\mu=\min\{a_1+a_5,a_2+a_4\}$. 

    By the symmetry of the role played respectively by 1,2 and 6,5 in the arguments above, the case where $G_\sigma$ is the smaller is when $\sigma$ is in the same class in $S_6/H_6$ as $\tilde{\sigma}=\compactpmatrix{
        1&2&3&4&5&6\\
        6&1&5&3&4&2}$. In this case, $\sigma$ is also equal to $\s{5}{3}{4}{2}{6}$. Clearly, we have not $G_\sigma<g_\mu$ in this case. So for worse $\sigma$ and $\mu$ (\textit{ie.} for $\sigma',\mu'$ such that $g_{\mu'}\leq g_\mu$ and $G_{\sigma'}\geq G_\sigma$) this will not happen either.

    So, for this reason, we need a more accurate tool, which will be provided by the following proposition.
\end{rem}

\begin{prop}\label{Gershgo}
    Let $a\in\mathcal{A}_6$ and $\sigma,\mu\in S_6$. Let 
    \begin{equation*}
        G_\sigma:=\max_{i\in\llbracket 1,n\rrbracket}\left\{a_{\sigma(i)}+a_{\sigma(i+1)}\right\}\text{ and }g_\sigma:=\min_{i\in\llbracket 1,n\rrbracket}\left\{a_{\sigma(i)}+a_{\sigma(i+1)}\right\},
    \end{equation*}
    with $a_{\sigma(7)}=a_{\sigma(1)}$. We set
    \begin{equation*}
        \alpha:=\left(a_{\sigma(1)}^2a_{\sigma(3)}^2+\cdots +a_{\sigma(4)}^2a_{\sigma(6)}^2\right)-\left(a_{\mu(1)}^2a_{\mu(3)}^2+\cdots +a_{\mu(4)}^2a_{\mu(6)}^2\right)
    \end{equation*}
    and 
    \begin{equation*}
        \beta:=-a_{\sigma(1)}^2a_{\sigma(3)}^2a_{\sigma(5)}^2-a_{\sigma(2)}^2a_{\sigma(4)}^2a_{\sigma(6)}^2+a_{\mu(1)}^2a_{\mu(3)}^2a_{\mu(5)}^2+a_{\mu(2)}^2a_{\mu(4)}^2a_{\mu(6)}^2.
    \end{equation*}
    We suppose $\alpha\neq0$ and we set $\gamma:=-\beta/\alpha$. If we have one of the following:
    \begin{enumerate}
        \item $\alpha<0$ and $\gamma>G_\sigma^2$,
        \item $\alpha>0$ and $\gamma<g_\sigma^2$;
    \end{enumerate}
    then $w(A_\mu)>w(A_\sigma)$.
\end{prop}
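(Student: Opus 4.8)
The plan is to exploit the fact that the difference polynomial $h := P_{\sigma,a} - P_{\mu,a}$ is affine. Since $P_{\sigma,a}$ and $P_{\mu,a}$ share the leading term $x^3$ and the coefficient $-\left(\sum_i a_i^2\right)$ of $x^2$, the only surviving terms of $h$ are the coefficient of $x$ and the constant term. Inspecting the definitions of $\alpha$ and $\beta$, one sees that $\alpha$ is precisely the difference of the coefficients of $x$ in $P_{\sigma,a}$ and $P_{\mu,a}$, while $\beta$ is precisely the difference of the constant terms (the common term $2a_1a_2a_3a_4a_5a_6$ cancels). Hence $h(x)=\alpha x+\beta=\alpha(x-\gamma)$, where $\gamma=-\beta/\alpha$ is well defined because $\alpha\neq 0$ by hypothesis.

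Next I would recall the a priori localization of the largest zero. Writing $t_\sigma$ for the largest zero of $P_{\sigma,a}$, we have $t_\sigma=\left(2w(A_\sigma)\right)^2$, since $t_\sigma$ is the square of the largest zero of $\mathcal{P}_{\sigma,a}$, which equals $w(2A_\sigma)=2w(A_\sigma)$. Proposition \ref{loc rayon num} gives $g_\sigma/2\leq w(A_\sigma)\leq G_\sigma/2$, and squaring (all quantities being non-negative) yields
\[
g_\sigma^2\leq t_\sigma\leq G_\sigma^2.
\]
This is the crucial input: it forces $t_\sigma$ into a known interval, which we then compare with the unique zero $\gamma$ of $h$.

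The heart of the argument is then a one-line sign computation in each case, both producing $h(t_\sigma)>0$. In case (1), $\alpha<0$ and $\gamma>G_\sigma^2\geq t_\sigma$, so $t_\sigma-\gamma<0$ and $h(t_\sigma)=\alpha(t_\sigma-\gamma)>0$. In case (2), $\alpha>0$ and $\gamma<g_\sigma^2\leq t_\sigma$, so $t_\sigma-\gamma>0$ and again $h(t_\sigma)=\alpha(t_\sigma-\gamma)>0$. With $h(t_\sigma)>0$ established, I would apply the first part of Lemma \ref{trivial} to the monic cubics $(f,g)=\left(P_{\sigma,a},P_{\mu,a}\right)$, whose largest zeros are $t_\sigma$ and $t_\mu$ and which have real zeros by the analysis carried out in the proof of Proposition \ref{delta}. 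This gives $t_\sigma<t_\mu$, equivalently $w(A_\sigma)<w(A_\mu)$, which is the claim.

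There is no serious obstacle here: once $h$ is recognized as affine with unique zero $\gamma$, and the localization $g_\sigma^2\leq t_\sigma\leq G_\sigma^2$ is in place, each hypothesis is exactly what is needed to separate $t_\sigma$ from $\gamma$ on the correct side. The only step demanding care is the bookkeeping that converts the bounds of Proposition \ref{loc rayon num}, stated for $w(A_\sigma)$, into bounds on the largest zero $t_\sigma$ of $P_{\sigma,a}$, i.e. remembering to square and to account for the factor $2$. As observed in Remark \ref{simple impossible}, comparing the Gershgorin intervals $[g_\sigma,G_\sigma]$ and $[g_\mu,G_\mu]$ directly is never conclusive; the improvement here comes from using the finer information carried by $\gamma$ rather than the endpoints of those intervals.
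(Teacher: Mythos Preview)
Your proof is correct and follows essentially the same approach as the paper: recognize that $h(x)=P_{\sigma,a}(x)-P_{\mu,a}(x)=\alpha x+\beta$ is affine, use Proposition~\ref{loc rayon num} to localize $t_\sigma=4w(A_\sigma)^2$ in $[g_\sigma^2,G_\sigma^2]$, observe that either hypothesis forces $h(t_\sigma)>0$, and conclude via Lemma~\ref{trivial}. Your write-up is simply more explicit about the factorization $h(x)=\alpha(x-\gamma)$ and the case-by-case sign analysis, where the paper compresses this into a single sentence.
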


\begin{proof}
    Both cases insure that $P_{\sigma,a}(t)>P_{\mu,a}(t)$ if $t\in [g_\sigma^2,G_\sigma^2]$. Hence by Proposition \ref{loc rayon num}, we know that $P_{\sigma,a}(t_\sigma)>P_{\mu,a}(t_\sigma)$ if $t_\sigma$ denotes here the largest zero of $P_{\sigma,a}$ because then $t_\sigma=4w(A_\sigma)^2\in [g_\sigma^2,G_\sigma^2]$. Lemma \ref{trivial} gives the targeted conclusion.
\end{proof}

\section{Proof of the theorem}

\begin{thm}\label{Il en reste 5}
    We have:
    \begin{equation*}
        \mathcal{M}_6\subseteq\left\{\s{5}{3}{4}{2}{6},\s{4}{5}{3}{2}{6},\s{5}{4}{3}{2}{6},\s{5}{4}{2}{3}{6},\s{4}{5}{2}{3}{6}\right\}.
    \end{equation*}
\end{thm}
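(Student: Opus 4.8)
The plan is to combine Proposition \ref{Min des familles} with the pairwise comparison tools of Propositions \ref{delta} and \ref{Gershgo}. By Proposition \ref{Min des familles}, for each fixed $a\in\mathcal{A}_6$ the numerical radius is minimized within each of the ten families by the representative listed there, and the decompositions in the tables (being strictly negative for $r_i>0$) show these minimizers are strict. Hence any global minimizer must be one of those ten family-minimizers, so $\mathcal{M}_6$ is contained in the set of ten. Exactly five of them, namely
$$\s{4}{6}{3}{2}{5},\ \s{4}{6}{2}{3}{5},\ \s{3}{5}{2}{4}{6},\ \s{3}{6}{2}{4}{5},\ \s{3}{6}{2}{5}{4},$$
are absent from the asserted set. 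It therefore suffices to show that each of these five $\tau$ fails the condition defining $\mathcal{M}_6$, i.e. that for every $a\in\mathcal{A}_6$ there is a \emph{retained} permutation $\rho$ with $w(A_\rho)<w(A_\tau)$.

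For a fixed pair $(\rho,\tau)$ the cubics $P_{\rho,a}$ and $P_{\tau,a}$ share their $x^3$ and $x^2$ coefficients, so their difference is the affine function $h(x)=\alpha x+\beta$ of Propositions \ref{delta} and \ref{Gershgo}; comparing largest roots thus reduces, through Lemma \ref{trivial}, to locating the single point $\gamma=-\beta/\alpha$. For each pair I would first read off the sign of $\alpha$: after cancelling the adjacency products common to $\rho$ and $\tau$, it reduces to expressions such as $(a_i^2-a_j^2)(a_k^2-a_\ell^2)$ whose signs are forced by $a_1<\cdots<a_6$ (the increments $r_i=a_i^2-a_{i-1}^2>0$ are the convenient variables, sometimes after a short monotonicity estimate). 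Then I would compute $\beta$ as the difference of the constant terms $a_{\rho(1)}^2a_{\rho(3)}^2a_{\rho(5)}^2+a_{\rho(2)}^2a_{\rho(4)}^2a_{\rho(6)}^2$ etc.\ (the $2a_1\cdots a_6$ parts cancelling), form $\gamma$, and apply: when $\alpha>0$, the sharp criterion $\gamma\leq x_M$ of Proposition \ref{delta}(3) to get $w(A_\tau)>w(A_\rho)$; when $\alpha<0$, the criterion $\gamma>G_\rho^2$ of Proposition \ref{Gershgo}(1), to the same effect. The competitor $\rho$ is chosen to maximize cancellation in $\gamma$: in several pairings a common factor (for instance $a_6^2-a_3^2$) cancels between numerator and denominator, leaving a manageable rational function of the $a_i^2$.

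The step I expect to be the main obstacle is the \emph{uniform} verification over all of $\mathcal{A}_6$ of the threshold inequality for $\gamma$. The naive estimate $\gamma\leq\tfrac13\sum_i a_i^2\leq x_M$ is already false in general (it fails, for example, at $a=(1,2,3,4,5,6)$ for the comparison of $\s{5}{4}{3}{2}{6}$ with $\s{3}{6}{2}{5}{4}$), so one cannot avoid the $\sqrt{\Delta}$ term: establishing $\gamma\leq x_M$ amounts to proving $\big(3\gamma-\sum_i a_i^2\big)^2\leq\Delta$ whenever $3\gamma>\sum_i a_i^2$, a genuine polynomial inequality in the $a_i^2$ rather than a one-line bound. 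For some $\tau$ a single retained competitor should work uniformly once this sharp criterion is used, while for others the sign of $\alpha$ or the size of $\gamma$ will force a partition of $\mathcal{A}_6$ (typically according to which products $a_i^2a_j^2$ dominate), with a different retained permutation assigned to each region; the concluding difficulty is then to check that the chosen regions cover $\mathcal{A}_6$ and that on each the corresponding inequality ($\gamma\leq x_M$, or $\gamma$ outside $[g_\rho^2,G_\rho^2]$ on the correct side) indeed holds.
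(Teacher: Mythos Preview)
Your plan is essentially the paper's approach: reduce to the ten family-minimizers via Proposition~\ref{Min des familles}, then eliminate five of them by pairwise comparison through the affine difference $h(x)=\alpha x+\beta$ and Propositions~\ref{delta}/\ref{Gershgo}. Two tactical points where the paper turns out simpler than you anticipate. First, no partition of $\mathcal{A}_6$ with region-dependent competitors is needed: a single competitor, $\sigma=\s{4}{5}{2}{3}{6}$, dominates four of the five discarded permutations uniformly over all $a\in\mathcal{A}_6$; the fifth, $\s{3}{6}{2}{5}{4}$, is beaten (again uniformly) by the itself-discarded $\s{3}{6}{2}{4}{5}$, so a two-step chain suffices and there is no need to insist on a \emph{retained} competitor. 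Second, in all five comparisons the paper arranges $\alpha>0$, and in three of them the cheaper Gershgorin criterion $\gamma<g_\sigma^2$ of Proposition~\ref{Gershgo}(2) already works (with only a harmless case split on which sum realizes $g_\sigma$); the $\sqrt{\Delta}$ estimate of Proposition~\ref{delta} is required only for the two remaining cases, and there a crude lower bound $\Delta\geq(a_2^2+a_5^2)^2$, obtained by term-by-term comparison, is enough---so the squaring step $\big(3\gamma-\sum a_i^2\big)^2\leq\Delta$ you flag as the main obstacle is never actually needed.
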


\begin{proof}
    Remembering Proposition \ref{Min des familles}, it is sufficient to prove that the following permutations are not in $\mathcal{M}_6$:
    \begin{equation*}
        \s{4}{6}{3}{2}{5},\s{4}{6}{2}{3}{5},\s{3}{6}{2}{5}{4},\s{3}{5}{2}{4}{6},\s{3}{6}{2}{4}{5}.
    \end{equation*}
    Fix $a\in\mathcal{A}_6$ for the rest of the proof. We want to show that for each of these five preceding permutations, there exists another permutation in $S_n/H_n$ which leads to a smaller numerical radius. To do so, we will first apply Proposition \ref{Gershgo} with $\sigma=\s{4}{5}{2}{3}{6}$ and $\mu$ equals to $\s{4}{6}{3}{2}{5}$ and $\s{4}{6}{2}{3}{5}$ successively. We will keep the notations of \ref{Gershgo} throughout the proof.

    \ding{172} If $\sigma=\s{4}{5}{2}{3}{6}$ and $\mu=\s{4}{6}{3}{2}{5}$, we have
    \begin{align*}
        \alpha&=\left( a_1^2a_5^2+a_1^2a_2^2+a_1^2a_3^2+a_4^2a_2^2+a_4^2a_3^2+a_4^2a_6^2+a_5^2a_3^2+a_5^2a_6^2+a_2^2a_6^2 \right)\\
        &-\left( a_1^2a_6^2+a_1^2a_3^2+a_1^2a_2^2+a_4^2a_3^2+a_4^2a_2^2+a_4^2a_5^2+a_6^2a_2^2+a_6^2a_5^2+a_3^2a_5^2 \right)\\
        &=a_1^2a_5^2+a_4^2a_6^2-a_1^2a_6^2-a_4^2a_5^2=\left(a_1^2-a_4^2\right)\left(a_5^2-a_6^2\right)>0.
    \end{align*}
    On the other hand, we have:
    \begin{equation*}
        \beta=-a_1^2a_5^2a_3^2-a_4^2a_2^2a_6^2+a_1^2a_6^2a_2^2+a_4^2a_3^2a_5^2=a_2^2a_6^2\left(a_1^2-a_4^2\right)+a_5^2a_3^2\left(a_4^2-a_1^2\right)=\left(a_4^2-a_1^2\right)\left(a_5^2a_3^2-a_2^2a_6^2\right).
    \end{equation*}
    So we have $\gamma=\frac{-\beta}{\alpha}=\frac{a_5^2a_3^2-a_2^2a_6^2}{a_5^2-a_6^2}$. In this case, $g_\sigma$ can be $a_1+a_4$ or $a_2+a_3$.
    
    \underline{If $g_\sigma=a_1+a_4$}, 
    \begin{align*}
        \gamma<g_\sigma^2&\Leftrightarrow \frac{a_5^2a_3^2-a_2^2a_6^2}{a_5^2-a_6^2}<(a_1+a_4)^2\\
        &\Leftrightarrow a_6^2a_2^2-a_5^2a_3^2<\left(a_6^2-a_5^2\right)(a_1+a_4)^2\\
        &\Leftrightarrow a_6^2\left[(a_1+a_4)^2-a_2^2\right]>a_5^2\left[(a_1+a_4)^2-a_3^2\right].
    \end{align*}
    Because $a_3>a_2$, $a_6>a_5$ and the fact that $(a_1+a_4)^2-a_2^2>0$, we always have $a_6^2\left[(a_1+a_4)^2-a_2^2\right]>a_5^2\left[(a_1+a_4)^2-a_3^2\right]$. So we have $\gamma<g_\sigma^2$ and by Proposition \ref{Gershgo} ($\alpha>0$), we obtain $w(A_\mu)>w(A_\sigma)$.
    \underline{Otherwise, if $g_\sigma=a_2+a_3$}, we have by the same computations:
    \begin{equation*}
        \gamma<g_\sigma^2\Leftrightarrow a_6^2\left[(a_2+a_3)^2-a_2^2\right]>a_5^2\left[(a_2+a_3)^2-a_3^2\right].
    \end{equation*}
    Since $(a_2+a_3)^2-a_2^2>0$, $\gamma<g_\sigma^2$ and so we can apply Proposition \ref{Gershgo} to show $w(A_\mu)>w(A_\sigma)$. We can now claim that $\s{4}{6}{3}{2}{5}\not\in\mathcal{M}_6$.

    \ding{173} Now, we set $\sigma=\s{4}{5}{2}{3}{6}$ and $\mu=\s{4}{6}{2}{3}{5}$. We have:
    \begin{align*}
        \alpha&=\left( a_1^2a_5^2+a_1^2a_2^2+a_1^2a_3^2+a_4^2a_2^2+a_4^2a_3^2+a_4^2a_6^2+a_5^2a_3^2+a_5^2a_6^2+a_2^2a_6^2 \right)\\
        &-\left( a_1^2a_6^2+a_1^2a_2^2+a_1^2a_3^2+a_4^2a_2^2+a_4^2a_3^2+a_4^2a_5^2+a_6^2a_3^2+a_6^2a_5^2+a_2^2a_5^2 \right)\\
        &=a_1^2a_5^2+a_4^2a_6^2+a_5^2a_3^2+a_2^2a_6^2-a_1^2a_6^2-a_4^2a_5^2-a_6^2a_3^2- a_2^2a_5^2\\
        &=\left(a_6^2-a_5^2\right)\left(a_4^2+a_2^2-a_3^2-a_1^2\right)>0.
    \end{align*}
    
    On the other hand, we have $\beta=\left(a_6^2-a_5^2\right)\left(a_1^2a_3^2-a_2^2a_4^2\right)$. Hence we have $\gamma=\frac{a_2^2a_4^2-a_1^2a_3^2}{a_4^2+a_2^2-a_3^2-a_1^2}$.

    So, \underline{if $g_\sigma=a_1+a_4$},
    \begin{align*}
        \gamma<g_\sigma^2&\Leftrightarrow \frac{a_2^2a_4^2-a_1^2a_3^2}{a_4^2+a_2^2-a_3^2-a_1^2}<(a_1+a_4)^2\\
        &\Leftrightarrow (a_1+a_4)^2\left(a_4^2+a_2^2-a_3^2-a_1^2\right)-a_2^2a_4^2+a_1^2a_3^2>0\\
        &\Leftrightarrow a_1^2\left(a_2^2-a_1^2\right)+a_4^2\left(a_4^2-a_3^2\right)+2a_1a_4\left(a_2^2+a_4^2-a_3^2-a_1^2\right)>0.
    \end{align*}
    This latter condition is always true because $a_4>a_3$ and $a_2>a_1$. So we always have $\gamma <g_\sigma^2$, hence $w(A_\mu)>w(A_\sigma)$.

    \underline{Otherwise, if $g_\sigma=a_2+a_3$}, by almost the same computations, we have:
    \begin{align*}
        \gamma<g_\sigma^2&\Leftrightarrow \frac{a_2^2a_4^2-a_1^2a_3^2}{a_4^2+a_2^2-a_3^2-a_1^2}<(a_2+a_3)^2\\
        &\Leftrightarrow (a_2+a_3)^2\left(a_4^2+a_2^2-a_3^2-a_1^2\right)-a_2^2a_4^2+a_1^2a_3^2>0\\
        &\Leftrightarrow a_2^2\left(a_2^2-a_1^2\right)+a_3^2\left(a_4^2-a_3^2\right)+2a_2a_3\left(a_2^2+a_4^2-a_3^2-a_1^2\right)>0.
    \end{align*}
    By the same arguments, we see that the last condition always holds. So we can also apply Proposition \ref{Gershgo}. We can now claim that $\s{4}{6}{2}{3}{5}\not\in\mathcal{M}_6$.

    \ding{174} If then $\sigma=\s{3}{6}{2}{4}{5}$ and $\mu=\s{3}{6}{2}{5}{4}$, we have:
    \begin{align*}
        \alpha&=\left(a_1^2a_6^2+a_1^2a_2^2+a_1^2a_4^2+a_3^2a_2^2+a_3^2a_4^2+a_3^2a_5^2+a_6^2a_4^2+a_6^2a_5^2+a_2^2a_5^2\right)\\
        &-\left(a_1^2a_6^2+a_1^2a_2^2+a_1^2a_5^2+a_3^2a_2^2+a_3^2a_5^2+a_3^2a_4^2+a_6^2a_5^2+a_6^2a_4^2+a_2^2a_4^2\right)\\
        &=a_1^2a_4^2+a_2^2a_5^2-a_1^2a_5^2-a_2^2a_4^2=\left(a_1^2-a_2^2\right)\left(a_4^2-a_5^2\right)>0.
    \end{align*}
    We also have:
    \begin{equation*}
        \beta=-a_1^2a_6^2a_4^2-a_3^2a_2^2a_5^2+a_1^2a_6^2a_5^2+a_3^2a_2^2a_4^2=\left(a_5^2-a_4^2\right)\left(a_1^2a_6^2-a_3^2a_2^2\right).
    \end{equation*}
    Hence we have $\gamma=\frac{\left(a_1^2a_6^2-a_3^2a_2^2\right)}{\left(a_1^2-a_2^2\right)}$. On the other hand, $g_\sigma=a_1+a_3$. So,
    \begin{align*}
        \gamma<g_\sigma^2&\Leftrightarrow \frac{\left(a_1^2a_6^2-a_3^2a_2^2\right)}{\left(a_1^2-a_2^2\right)}<(a_1+a_3)^2\\
        &\Leftrightarrow (a_1+a_3)^2\left(a_2^2-a_1^2\right)>a_3^2a_2^2-a_1^2a_6^2\\
        &\Leftrightarrow a_2^2\left[(a_1+a_3)^2-a_3^2\right]>a_1^2\left[(a_1+a_3)^2-a_6^2\right]
    \end{align*}
    The latter is always true because $(a_1+a_3)^2-a_3^2>0$, $a_2>a_1$ and $a_6>a_3$. Thus, the condition $\gamma<g_\sigma^2$ holds. Hence, by Proposition \ref{Gershgo}, we have $\s{3}{6}{2}{5}{4}\not\in\mathcal{M}_6$.

    Now, in order to eliminate the two remaining permutations, we want to apply Proposition \ref{delta} with\\ $\sigma=\s{4}{5}{2}{3}{6}$ and $\mu$ equals successively $\s{3}{5}{2}{4}{6}$ and $\s{3}{6}{2}{4}{5}$.

    \ding{175} Let $\sigma=\s{4}{5}{2}{3}{6}$ and $\mu=\s{3}{5}{2}{4}{6}$. We have:
    \begin{align*}
        \alpha&=a_1^2a_5^2+a_1^2a_2^2+a_1^2a_3^2+a_4^2a_2^2+a_4^2a_3^2+a_4^2a_6^2+a_5^2a_3^2+a_5^2a_6^2+a_2^2a_6^2\\
        &-\left(a_1^2a_5^2+a_1^2a_2^2+a_1^2a_4^2+a_3^2a_2^2+a_3^2a_4^2+a_3^2a_6^2+a_5^2a_4^2+a_5^2a_6^2+a_2^2a_6^2\right)\\
        &=a_1^2a_3^2+a_4^2a_2^2+a_4^2a_6^2+a_5^2a_3^2-a_1^2a_4^2-a_3^2a_2^2-a_3^2a_6^2-a_5^2a_4^2\\
        &=a_3^2\left(a_1^2+a_5^2-a_2^2-a_6^2\right)+a_4^2\left(a_2^2+a_6^2-a_1^2-a_5^2\right)\\
        &=\left(a_4^2-a_3^2\right)\left(a_2^2+a_6^2-a_1^2-a_5^2\right)>0.
\end{align*}
We also have:
\begin{align*}
    \beta&=-a_1^2a_5^2a_3^2-a_4^2a_2^2a_6^2+a_1^2a_5^2a_4^2+a_3^2a_2^2a_6^2\\
    &=a_3^2\left(a_2^2a_6^2-a_1^2a_5^2\right)+a_4^2\left(a_1^2a_5^2-a_2^2a_6^2\right)\\
    &=\left(a_3^2-a_4^2\right)\left(a_2^2a_6^2-a_1^2a_5^2\right).
\end{align*}
Hence $\gamma=\frac{-\beta}{\alpha}=\frac{a_2^2a_6^2-a_1^2a_5^2}{a_2^2+a_6^2-a_1^2-a_5^2}$. If we set as in Proposition \ref{delta}
\begin{equation*}
    \Delta:=\left(\sum_{i=1}^6a_i^2\right)^2-3\left(a_{\sigma(1)}^2a_{\sigma(3)}^2+\cdots +a_{\sigma(4)}^2a_{\sigma(6)}^2\right),
\end{equation*}
in view of Lemma \ref{id remar}, we have:
\begin{equation*}
    \Delta=\sum_{i=1}^6a_i^4+2\sum_{k=1}^6a_{\sigma(k)}^2a_{\sigma(k+1)}^2-\left(a_{\sigma(1)}^2a_{\sigma(3)}^2+\cdots +a_{\sigma(4)}^2a_{\sigma(6)}^2\right).
\end{equation*}
With the same notations as in Proposition \ref{delta}, we have:
\begin{align*}
    \gamma\leq x_M&\Leftrightarrow \left(\sum_{i=0}^6a_i^2+ \sqrt{\Delta}\right)\left(a_2^2+a_6^2-a_1^2-a_5^2\right)-3a_2^2a_6^2+3a_1^2a_5^2\geq0\\
    &\Leftrightarrow\sum_{i=1}^5a_i^2a_2^2+\sum_{i=1; i\neq2}^6a_i^2a_6^2-\sum_{i=1;i\neq5}^6a_i^2a_1^2-\sum_{i=2}^6a_i^2a_5^2-a_2^2a_6^2+a_1^2a_5^2+\sqrt{\Delta}\left(a_2^2+a_6^2-a_1^2-a_5^2\right)\geq0\\
    &\Leftarrow a_5^2a_2^2-a_6^2a_1^2+a_1 ^2a_6^2-a_2^2a_5^2-a_2^2a_6^2+a_1^2a_5^2+\sqrt{\Delta}\left(a_2^2+a_6^2-a_1^2-a_5^2\right)\geq0
\end{align*}
by comparing the terms of the first and the third sum ($a_2>a_1$) and those of the second and the fourth $(a_6>a_5)$.

So in order to have $\gamma\leq x_M$, it is sufficient to have this inequality:
\begin{equation}\label{ce qu'il nous faut}
    -a_2^2a_6^2+a_1^2a_5^2+\sqrt{\Delta}\left(a_2^2+a_6^2-a_1^2-a_5^2\right)\geq0.
\end{equation}
In our case we have:
\begin{align*}
    \Delta&=\sum_{i=1}^6a_i^4+2\left(a_1^2a_4^2+a_4^2a_5^2+a_5^2a_2^2+a_2^2a_3^2+a_3^2a_6^2+a_6^2a_1^2\right)\\
    -&\left(a_1^2a_5^2+a_1^2a_2^2+a_1^2a_3^2+a_4^2a_2^2+a_4^2a_3^2+a_4^2a_6^2+a_5^2a_3^2+a_5^2a_6^2+a_2^2a_6^2\right).
\end{align*}
Because $a_1^2a_5^2<a_6^2a_1^2$, $a_1^2a_2^2<a_1^2a_4^2$, $a_1^2a_3^2<a_1^2a_4^2$, $a_4^2a_2^2<a_4^2a_5^2$, $a_5^2a_3^2<a_3^2a_6^2$ and $a_2^2a_6^2<a_3^2a_6^2$, we have:
\begin{equation*}
    \Delta\geq \sum_{i=1}^6a_i^4+2\left(a_5^2a_2^2+a_2^2a_3^2\right)+a_4^2a_5^2+a_6^2a_1^2-\left(a_4^2a_3^2+a_4^2a_6^2+a_5^2a_6^2\right).
\end{equation*}
$(a-b-c)^2\geq0$ for all $a,b,c\in\R$ gives $ab+ac\leq \frac{\left(a^2+b^2+c^2\right)}{2}+bc$ for all $a,b,c\in\R$. So we have
\begin{equation*}
    a_4^2a_6^2+a_5^2a_6^2\leq\frac{1}{2}\left(a_6^4+a_4^4+a_5^4\right)+a_5^2a_4^2.
\end{equation*}
Hence, because $a_4^2a_3^2\leq\frac{1}{2}\left(a_3^4+a_4^4\right)$, we get:
\begin{align}\label{Delta minoration}
    \Delta&\geq \sum_{i=1}^6a_i^4+2\left(a_5^2a_2^2+a_2^2a_3^2\right)+a_6^2a_1^2-\frac{1}{2}\left(a_3^4+2a_4^4+a_5^4+a_6^4\right)\notag\\
    &\geq a_5^4+a_2^4+2a_2^2a_5^2=\left(a_2^2+a_5^2\right)^2.
\end{align}
As in the proof of \ref{Min des familles}, we use the notations $r_i$ where the $r_i$ are such that $r_i=a_i^2-a_{i-1}^2$ for all $i\in\llbracket 1,6\rrbracket$ with $a_0=0$ here.

So we have:
\begin{equation}\label{différence ri}
    a_1^2a_5^2-a_2^2a_6^2=r_1a_5^2-(r_1+r_2)\left(a_5^2+r_6\right)=-r_1r_6-r_2\left(\sum_{i=1}^6r_i\right).
\end{equation}
On the other hand, $a_2^2+a_6^2-a_1^2-a_5^2=r_2+r_6$. So by the equations \ref{Delta minoration} and \ref{différence ri}, we have
\begin{align*}
    &-a_2^2a_6^2+a_1^2a_5^2+\sqrt{\Delta}\left(a_2^2+a_6^2-a_1^2-a_5^2\right)\geq0\\
    \Leftarrow & -r_1r_6-r_2\left(\sum_{i=1}^6r_i\right)+(2r_1+2r_2+r_3+r_4+r_5)(r_2+r_6)\geq0\\
    \Leftrightarrow & ~r_6(r_1+r_2+r_3+r_4+r_5)+r_2(r_1+r_2)\geq0.
\end{align*}
The latter is always true because $r_i>0$ for all $i\geq2$ and $r_1\geq0$. Hence \ref{ce qu'il nous faut} holds and so $\gamma\leq x_M$. By Proposition \ref{delta}, we conclude that $\s{3}{5}{2}{4}{6}\not\in\mathcal{M}_6$.

\ding{176} Finally, we set $\sigma=\s{4}{5}{2}{3}{6}$ and $\mu=\s{3}{6}{2}{4}{5}$.
\begin{align*}
    \alpha&=a_1^2a_5^2+a_1^2a_2^2+a_1^2a_3^2+a_4^2a_2^2+a_4^2a_3^2+a_4^2a_6^2+a_5^2a_3^2+a_5^2a_6^2+a_2^2a_6^2\\
    -&\left(a_1^2a_6^2+a_1^2a_2^2+a_1^2a_4^2+a_3^2a_2^2+a_3^2a_4^2+a_3^2a_5^2+a_6^2a_4^2+a_6^2a_5^2+a_2^2a_5^2\right)\\
    &=a_1^2\left(a_5^2+a_3^2-a_4^2-a_6^2\right)+a_2^2\left(a_4^2+a_6^2-a_3^2-a_5^2\right)\\
    &=\left(a_2^2-a_1^2\right)\left(a_4^2+a_6^2-a_3^2-a_5^2\right)>0.
\end{align*}
On the other hand,
\begin{equation*}
    \beta=-a_1^2a_5^2a_3^2-a_4^2a_2^2a_6^2+a_1^2a_6^2a_4^2+a_3^2a_2^2a_5^2=\left(a_2^2-a_1^2\right)\left(a_5^2a_3^2-a_4^2a_6^2\right).
\end{equation*}
So $\gamma=\frac{a_4^2a_6^2-a_5^2a_3^2}{a_4^2+a_6^2-a_5^2-a_3^2}$ and with the same notations as above:
\begin{align*}
    &\gamma\leq x_M\Leftrightarrow \left(\sum_{i=1}^6a_i^2+\sqrt{\Delta}\right)\left(a_4^2+a_6^2-a_5^2-a_3^2\right)-3a_4^2a_6^2+3a_5^2a_3^2\geq0\\
    &\Leftrightarrow \sqrt{\Delta}\left(a_4^2+a_6^2-a_5^2-a_3^2\right)+a_5^2a_3^2-a_4^2a_6^2+\sum_{i=1}^5a_i^2a_4^2+\sum_{i=1;i\neq4}^6a_i^2a_6^2-\sum_{i=1;i\neq3}a_i^2a_5^2-\sum_{i=1;i\neq5}a_i^2a_3^2\geq0\\
    &\Leftarrow \sqrt{\Delta}\left(a_4^2+a_6^2-a_5^2-a_3^2\right)+a_5^2a_3^2-a_4^2a_6^2+a_4^2a_5^2+a_6^2a_3^2-a_4^2a_5^2-a_3^2a_6^2\geq0
\end{align*}
by comparing the terms of the first and the fourth sum ($a_4>a_3$) and those of the second and the third ($a_6>a_5$).

So in order to get $\gamma\leq x_M$, it is sufficient to have
\begin{equation}\label{ce qu'il nous faut 2}
    \sqrt{\Delta}\left(a_4^2+a_6^2-a_5^2-a_3^2\right)+a_5^2a_3^2-a_4^2a_6^2\geq 0.
\end{equation}
With the same notations as before, we have:
\begin{align*}
    a_5^2a_3^2-a_4^2a_6^2&=a_5^2a_3^2-\left(a_3^2+r_4\right)\left(a_5^2+r_6\right)\\
    &=-a_3^2r_6-r_4a_5^2-r_4r_6\\
    &=-r_6\left(\sum_{i=1}^4r_i\right)-r_4\left(\sum_{i=1}^5r_i\right).
\end{align*}
On the other hand, $a_4^2+a_6^2-a_5^2-a_3^2=r_4+r_6$. So, by the lower bound for $\Delta$ obtained in the inequality \ref{Delta minoration} ($\sigma$ is the same here as in the precedent case, so the value of $\Delta$ doesn't change), for having \ref{ce qu'il nous faut 2}, we only have to check that:
\begin{align*}
    &(2r_1+2r_2+r_3+r_4+r_5)(r_4+r_6)-r_6\left(\sum_{i=1}^4r_i\right)-r_4\left(\sum_{i=1}^5r_i\right)\geq0\\
    \Leftrightarrow&~r_4(r_1+r2)+r_6(r_1+r_2+r_5)\geq0.
\end{align*}
The latter is always true because $r_i>0$ for all $i\geq2$ and $r_1\geq0$. So we have \ref{ce qu'il nous faut 2} and so $\gamma\leq x_M$. By Proposition \ref{delta}, the proof is complete.
\end{proof}

\begin{thm}\label{final}
    $\mathcal{M}_6$ is composed by exactly the five following permutations:
    \begin{equation*}
        \s{5}{3}{4}{2}{6},\s{4}{5}{3}{2}{6},\s{5}{4}{3}{2}{6},\s{5}{4}{2}{3}{6},\s{4}{5}{2}{3}{6}.
    \end{equation*}
\end{thm}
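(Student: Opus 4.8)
The plan is to combine Theorem \ref{Il en reste 5} with an explicit demonstration that each of the five listed permutations is genuinely realized as a minimizer. Theorem \ref{Il en reste 5} already gives the inclusion of $\mathcal{M}_6$ in the five-element set of the statement, so only the reverse inclusion remains: for each such permutation $\sigma$ we must produce a single weight vector $a\in\mathcal{A}_6$ for which $\sigma$ minimizes the numerical radius over all of $S_6/H_6$. The key reduction is that, by Theorem \ref{Il en reste 5}, for \emph{any} fixed $a$ the global minimizer over $S_6/H_6$ already lies among these five candidates; hence it suffices to show that, for the chosen $a$, the candidate $\sigma$ satisfies $w(A_\sigma)\leq w(A_\mu)$ for the other four candidates $\mu$. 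Four well-chosen pairwise comparisons then force $w(A_\sigma)$ to equal the minimum over the five, and therefore over $S_6/H_6$, so $\sigma\in\mathcal{M}_6$.

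Next I would record the comparison machinery. A direct check shows the five candidates lie in five \emph{distinct} families, namely $\{1,2,3\}$, $\{1,2,5\}$, $\{1,2,4\}$, $\{1,3,4\}$ and $\{1,3,5\}$ for the respective sets $\{\sigma(1),\sigma(3),\sigma(5)\}$; consequently every pairwise comparison among them is cross-family, so Proposition \ref{135} does not apply directly. Instead, for a pair $\sigma,\mu$ the difference $h=P_{\sigma,a}-P_{\mu,a}$ is the affine function $\alpha x+\beta$ with $\alpha,\beta$ as in Proposition \ref{delta}, and the sign of $t_\mu-t_\sigma$ (the gap between the largest roots, equivalently between $4w(A_\mu)^2$ and $4w(A_\sigma)^2$) is read off from the position of $\gamma=-\beta/\alpha$ relative to $x_M$ via Proposition \ref{delta}, or relative to $g_\sigma^2$ and $G_\sigma^2$ via Proposition \ref{Gershgo}. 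Since each chosen $a$ will be numerical, the quantities $\alpha$, $\beta$, $\gamma$, $g_\sigma$, $G_\sigma$ and $\sqrt{\Delta}$ are explicit, so each comparison reduces to an exact inequality test.

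Then, for each candidate in turn, I would exhibit a concrete $a\in\mathcal{A}_6$ and run the four comparisons. Heuristically, weight vectors close to an arithmetic progression should favour the balanced permutation $\s{5}{3}{4}{2}{6}$, which by Remark \ref{simple impossible} is simultaneously the one nearly minimizing $G_\sigma$ and maximizing $g_\sigma$, while progressively more spread-out or skewed weight vectors should hand the minimum to the other four in turn; one therefore searches within $\mathcal{A}_6$ for a witness in each candidate's winning region. For each witness the four inequalities $w(A_\sigma)\leq w(A_\mu)$ are then verified by the sign test of the previous paragraph, applying Proposition \ref{delta} when $\gamma\leq x_M$ and Proposition \ref{Gershgo} when the sign of $\alpha$ and the Gershgorin bounds $g_\sigma^2,G_\sigma^2$ are the convenient data.

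The main obstacle I expect is precisely this search: the five candidates are very close competitors, so the region of $\mathcal{A}_6$ on which a given candidate strictly wins can be thin, and confirming that all five regions are nonempty is the entire content of the reverse inclusion. In particular one must rule out that some candidate is dominated everywhere by another. The delicate cases are the borderline comparisons where $\gamma$ is close to $x_M$ (or to $g_\sigma^2$): there the crude bounds are insufficient, and one must repeat the careful $r_i$-bookkeeping together with a lower bound on $\Delta$ of exactly the type already used in the proof of Theorem \ref{Il en reste 5}. Once five suitable witnesses are fixed, the remaining work is the finite and routine evaluation of the twenty pairwise sign tests.
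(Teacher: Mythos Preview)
Your plan is correct and matches the paper's proof: Theorem \ref{Il en reste 5} gives one inclusion, and for the reverse the paper exhibits five explicit numerical weight sequences (one per candidate) and verifies the twenty pairwise comparisons exactly as you describe. The only simplification relative to your outline is that Proposition \ref{Gershgo} alone handles all twenty tests once concrete numerical witnesses are fixed, so no appeal to Proposition \ref{delta} or to the $r_i$-bookkeeping is needed in this step.
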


\begin{proof}
    In view of Theorem \ref{Il en reste 5}, we just need to show that for every $\sigma$ above, $\sigma\in\mathcal{M}_6$. Thus, we will present five different weight sequences $a$ which will lead to each of the five possible cases. We will keep the notations of Proposition \ref{Gershgo} throughout the proof. Our goal is to use this proposition four times for each of the five weight sequences chosen in order to eliminate the permutations that are not minimizing the numerical radius. So we have to apply this proposition 20 times in the same way we did in the proof of \ref{Il en reste 5}. We will summarize the results of these computations in five tables.

$\sigma=\s{5}{3}{4}{2}{6}$ and we set here $a=(1, 1.3, 1.7, 6.3, 6.8, 7.1)$.

We have $g_\sigma=\min\{a_1+a_5,a_2+a_4\}$ and $G_\sigma=\max\{a_6+a_2,a_5+a_3\}$. Thus, with this weight sequence, $g_\sigma^2=57.76$ and $G_\sigma^2=72.25$.
\begin{center}
	\begin{tabular}{|c|c|c|}\hline
		 $\mu$&Sign of $\alpha$&Approximate value of $\gamma$\\\hline
		 $\s{4}{5}{3}{2}{6}$&$\alpha>0$&52.464\\\hline
		 $\s{5}{4}{3}{2}{6}$&$\alpha>0$&52.284\\\hline
		 $\s{5}{4}{2}{3}{6}$&$\alpha>0$&53.541\\\hline
		 $\s{4}{5}{2}{3}{6}$&$\alpha>0$&53.721\\\hline
	\end{tabular}
\end{center}

$\sigma=\s{4}{5}{3}{2}{6}$ and we set here $a=(1, 1.3, 1.4, 1.6, 2, 5)$.

We have $g_\sigma=\min\{a_1+a_4,a_2+a_3\}$ and $G_\sigma=\max\{a_5+a_4,a_6+a_2\}$. Thus, with this weight sequence, $g_\sigma^2=6.76$ and $G_\sigma^2=39.69$.
\begin{center}
	\begin{tabular}{|c|c|c|}\hline
		 $\mu$&Sign of $\alpha$&Approximate value of $\gamma$\\\hline
		 $\s{5}{3}{4}{2}{6}$&$\alpha<0$&45.915\\\hline
		 $\s{5}{4}{3}{2}{6}$&$\alpha<0$&49.281\\\hline
		 $\s{5}{4}{2}{3}{6}$&$\alpha>0$&-8.942\\\hline
		 $\s{4}{5}{2}{3}{6}$&$\alpha>0$&2.857\\\hline
	\end{tabular}
\end{center}

$\sigma=\s{5}{4}{3}{2}{6}$ and we set here $a=(1, 1.15, 1.22, 1.25, 1.3, 1.6)$.

We have $g_\sigma=\min\{a_1+a_5,a_2+a_3\}$ and $G_\sigma=\{a_6+a_2,a_5+a_4\}$. Thus, with this weight sequence, $g_\sigma^2=5.29$ and $G_\sigma^2=7.562$.
\begin{center}
	\begin{tabular}{|c|c|c|}\hline
		 $\mu$&Sign of $\alpha$&Approximate value of $\gamma$\\\hline
		 $\s{5}{3}{4}{2}{6}$&$\alpha<0$&8.174\\\hline
		 $\s{4}{5}{3}{2}{6}$&$\alpha>0$&5.094\\\hline
		 $\s{5}{4}{2}{3}{6}$&$\alpha>0$&2.771\\\hline
		 $\s{4}{5}{2}{3}{6}$&$\alpha>0$&3.39\\\hline
	\end{tabular}
\end{center}

$\sigma=\s{5}{4}{2}{3}{6}$ and we set here $a=(1, 3.4, 3.8, 4.2, 4.3, 4.5)$.

We have $g_\sigma=\min\{a_1+a_5,a_2+a_3\}$ and $G_\sigma=\max\{a_6+a_3,a_5+a_4\}$. Thus, with this weight sequence, $g_\sigma^2=28.09$ and $G_\sigma^2=72.25$.
\begin{center}
	\begin{tabular}{|c|c|c|}\hline
		 $\mu$&Sign of $\alpha$&Approximate value of $\gamma$\\\hline
		 $\s{5}{3}{4}{2}{6}$&$\alpha<0$&73.711\\\hline
		 $\s{4}{5}{3}{2}{6}$&$\alpha>0$&-201.887\\\hline
		 $\s{5}{4}{3}{2}{6}$&$\alpha<0$&136.698\\\hline
		 $\s{4}{5}{2}{3}{6}$&$\alpha>0$&20.8\\\hline
	\end{tabular}
\end{center}

$\sigma=\s{4}{5}{2}{3}{6}$ and we set here $a=(1, 1.03, 49.7, 53.5, 53.7, 54.7)$.

We have $g_\sigma=\min\{a_1+a_4,a_2+a_3\}$ and $G_\sigma=\max\{a_6+a_3,a_5+a_4\}$. Thus, with this weight sequence, $g_\sigma^2=2573.533$ and $G_\sigma^2=11491.84$.
\begin{center}
	\begin{tabular}{|c|c|c|}\hline
		 $\mu$&Sign of $\alpha$&Approximate value of $\gamma$\\\hline
		 $\s{5}{3}{4}{2}{6}$&$\alpha<0$&17008.607\\\hline
		 $\s{4}{5}{3}{2}{6}$&$\alpha<0$&78978.099\\\hline
		 $\s{5}{4}{3}{2}{6}$&$\alpha<0$&66430.751\\\hline
		 $\s{5}{4}{2}{3}{6}$&$\alpha<0$&11563.519\\\hline
	\end{tabular}
\end{center}
In these five tables, we always have $\gamma<g_\sigma^2$ when $\alpha>0$ and $\gamma>G_\sigma^2$ when $\alpha<0$. By Proposition \ref{Gershgo}, the proof is complete.

\end{proof}

\section{Remarks}

\begin{rem}\label{proportions}
    One might ask how often we "fall" into each case of Theorem \ref{final} for a random sequence of weights $a\in\mathcal{A}_6$. We generated 100 000 weight sequences $a\in\mathcal{A}_6$ randomly and then numerically found the permutation in $S_6/H_6$ which minimizes the numerical radius of $A_\sigma$. With the same order for the five permutations $\sigma\in\mathcal{M}_6$ as in the preceding theorem, we have $w(A_\sigma)< w(A_\mu), \forall \mu \in S_6/H_6,\mu\neq \sigma$ in 63.6\%, 12.8\%, 13.4\%, 10.2\% and 1.0\% of the cases.
\end{rem}

\begin{rem}
    It is impossible with our tools to determine the minimizing permutation for all the weights $a\in\mathcal{A}_6$. Indeed, with a test on 1 000 000 weight sequences $a\in\mathcal{A}_6$ generated randomly, our propositions \ref{delta} and \ref{Gershgo} succeed in determining the minimizing permutation in only 38\% of the cases.
\end{rem}

\begin{rem}
    By applying four times Proposition \ref{Gershgo}, one can determinate a "region" $R\subset\mathcal{A}_6$ such that for all $a\in R$, the minimizing permutation remains the same. But, as seen in the proof of Theorem \ref{final}, there are always two cases to consider for the value of $G_\sigma$ and the value of $g_\sigma$ for each of the five $\sigma$ which belong to $\mathcal{M}_6$. This leads to very complicated conditions to describe the regions, so it would not be very useful and concrete.
\end{rem}

\begin{rem}
    On the proof of Theorem \ref{final}, we can observe that when $\sigma=\s{5}{3}{4}{2}{6}$, $\alpha>0$ for the four different values of $\mu$ we consider. For the values of $a$ we consider in the same proof, when $\mu=\s{5}{3}{4}{2}{6}$, we always have $\alpha<0$ for all others $\sigma$. One might ask if it is the case for all $\mu\in S_6$ and all $a\in\mathcal{A}_6$. The answer to this question is positive as shown in the next proposition in view of Lemma \ref{id remar}.
\end{rem}

\begin{prop}\label{min coeff x2}
    Let $a\in\mathcal{A}_6$, the permutations $\sigma\in S_6$ that minimize the sum $\sum_{i=1}^6a_{\sigma(i)}^2a^2_{\sigma(i+1)}$ are exactly those of the form
    \begin{equation*}
        \s{5}{3}{4}{2}{6}h\text{ with }h\in H_6.
    \end{equation*}
\end{prop}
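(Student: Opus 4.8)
The goal is to characterize the permutations minimizing $\sum_{i=1}^6 a_{\sigma(i)}^2 a_{\sigma(i+1)}^2$ (with $a_7=a_1$) as exactly the coset $\s{5}{3}{4}{2}{6}H_6$. The key conceptual observation is that this sum is invariant under the action of $H_6$: multiplying $\sigma$ on the right by $c_6$ cyclically shifts the indices $i\mapsto i+1$, and by $m_6$ reverses them, neither of which changes the cyclic sum of products of consecutive terms. Hence the quantity descends to a well-defined function on $S_6/H_6$, and it suffices to work with the $60$ representatives already organized into $10$ families in Proposition~\ref{Min des familles}.

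First I would note that the problem reduces to a two-stage minimization. Within each of the ten families (fixed value of $\{\sigma(1),\sigma(3),\sigma(5)\}$), Proposition~\ref{Min des familles} already identifies the family-minimizer: by Proposition~\ref{135}, minimizing the cyclic sum $\sum a_{\sigma(i)}^2 a_{\sigma(i+1)}^2$ is equivalent (via Lemma~\ref{id remar}) to \emph{maximizing} the complementary sum $\left(a_{\sigma(1)}^2a_{\sigma(3)}^2+\cdots+a_{\sigma(4)}^2a_{\sigma(6)}^2\right)$, and the tables in that proof already show these ten family-representatives are the maximizers. So the ten candidates for the global minimum of the cyclic sum are precisely the ten family-minimizers listed there. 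What remains is the \emph{inter-family} comparison: I must show that among these ten, the permutation $\s{5}{3}{4}{2}{6}$ gives the strictly smallest cyclic sum for every $a\in\mathcal{A}_6$.

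The main technical step is therefore to prove, for each of the other nine family-representatives $\mu$, that
\begin{equation*}
  S\bigl(\tau,\mu\bigr)=\sum_{i=1}^6\left(a_{\tau(i)}^2a_{\tau(i+1)}^2-a_{\mu(i)}^2a_{\mu(i+1)}^2\right)<0
  \qquad\text{for }\tau=\s{5}{3}{4}{2}{6},
\end{equation*}
strictly, for all admissible $a$. I would handle this exactly as in Proposition~\ref{Min des familles}: compute each difference $S(\tau,\mu)$ directly, then rewrite it in terms of the gaps $r_i=a_i^2-a_{i-1}^2$ (with $r_i>0$ for $i\ge 2$ and $r_1\ge 0$), so that it manifestly appears as a negative combination of products of positive quantities. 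The strictness needs a small care: each difference should be shown to be bounded away from $0$, which follows because the factors involve gaps $r_i$ with $i\ge 2$ that are strictly positive, whereas $r_1$ may vanish. Concretely, the key inequalities are of the type $\di{i}{j}<0$ for $i<j$ multiplied by a positive factor, and I would verify that in each of the nine comparisons at least one strictly-positive gap $r_i$ ($i\ge2$) is present as a factor.

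The hard part will be bookkeeping rather than ideas: one must verify nine explicit sign determinations without error, and confirm strictness (not merely $\le$) in each. The inequality $\s{5}{3}{4}{2}{6}$ being strictly better than \emph{every} competing family-minimizer, uniformly in $a$, is what forces the minimizer to be unique up to $H_6$; if any comparison were only weakly negative or sign-indefinite, the minimizing coset could change with $a$ (as indeed happens for the numerical-radius minimization studied in Theorem~\ref{final}, where the cubic's constant term also matters). Here, by contrast, only the middle coefficient $\left(a_{\sigma(1)}^2a_{\sigma(3)}^2+\cdots\right)$ is at stake, so the comparison is a pure inequality between symmetric-type sums and admits a clean sign. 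Once all nine strict inequalities are confirmed, Proposition~\ref{135} and the $H_6$-invariance together yield that $\s{5}{3}{4}{2}{6}h$, $h\in H_6$, are exactly the global minimizers, completing the proof.
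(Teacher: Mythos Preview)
Your approach is correct but genuinely different from the paper's. You propose a two-stage global comparison: first invoke the tables in Proposition~\ref{Min des familles} to reduce to the ten family-minimizers of the cyclic sum, then check nine explicit inter-family differences $S(\tau,\mu)$ and show each factors as a strictly negative combination of the $r_i$'s. This works (the two comparisons I spot-checked, against $\s{5}{4}{3}{2}{6}$ and $\s{4}{5}{3}{2}{6}$, indeed collapse to negative sums of products of the $r_i$), and the strictness is secured because every such difference contains at least one factor $r_i$ with $i\ge 2$.

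The paper instead runs a \emph{local-optimality} argument: it takes any minimizer $\sigma$ (normalized so that $\sigma(1)=1$, $\sigma(2)<\sigma(6)$), compares it with the twelve nearby permutations $\tau\sigma$ where $\tau=(\sigma(j),\sigma(j+1))$ or $\tau=(\sigma(j),\sigma(j+2))$, and reads off from the sign of each resulting product $(a_{\sigma(j)}^2-a_{\sigma(j+k)}^2)(a_{\sigma(j-1)}^2-a_{\sigma(j+k+1)}^2)$ an order relation between two values of $\sigma$. These relations accumulate to pin down $\sigma$ uniquely. The paper's route is shorter and self-contained (it does not lean on Proposition~\ref{Min des familles} or its tables), and it explains \emph{why} this particular arrangement is forced: it is the unique locally stable one under adjacent and next-to-adjacent swaps. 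Your route, by contrast, recycles the existing family machinery, at the price of nine bespoke computations that you outline but do not carry out; it is perfectly valid, but the paper's argument is the cleaner of the two.
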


\begin{proof}
    By the discussion about $H_6$, we only need to show that if $\sigma\in S_6$ is such that $\sigma(1)=1$, $\sigma(2)<\sigma(6)$ and $\sigma$ minimize the value of $\sum_{i=1}^6a_{\sigma(i)}^2a^2_{\sigma(i+1)}$, then we have $\sigma=\s{5}{3}{4}{2}{6}$.

    So let $\sigma$ be such a permutation. If $\sigma$ minimizes the value of $\sum_{i=1}^6 a_{\sigma(i)}^2 a^2_{\sigma(i+1)}$, we have 
    \begin{equation}\label{min}
        \sum_{i=1}^6 a_{\sigma(i)}^2 a^2_{\sigma(i+1)}-a_{\mu(i)}^2 a^2_{\mu(i+1)}\leq0,
    \end{equation}
    for all $\mu\in S_6$ of the form $\mu=\tau\sigma$ with $\tau$ a transposition of the form $(\sigma(j),\sigma(j+1))$ or $(\sigma(j),\sigma(j+2))$ with $j\in\llbracket1,6\rrbracket$ if we set from now $\sigma(j)=\sigma(j_0)$ with $j_0\equiv j [6]$ and $1\leq j_0\leq 6$.

    With these notations, for all $j\in\llbracket1,6\rrbracket$, if $\mu=(\sigma(j),\sigma(j+1))\circ\sigma$, one can easily compute that:
    \begin{equation*}
        \sum_{i=1}^6 a_{\sigma(i)}^2 a^2_{\sigma(i+1)}-a_{\mu(i)}^2 a^2_{\mu(i+1)}=\left(a_{\sigma(j)}^2-a_{\sigma(j+1)}^2\right)\left(a_{\sigma(j-1)}^2-a_{\sigma(j+2)}^2\right).
    \end{equation*}
    Using the fact that $a\in\mathcal{A}_6$, the fact that $a_{\sigma(1)}<a_{\sigma(k)}$ for all $k\in\llbracket 2,6\rrbracket$ and the inequality \ref{min}, we get:
    \begin{itemize}
        \item with $j=1$: $\sigma(6)>\sigma(3)$;
        \item with $j=2$: $\sigma(2)>\sigma(3)$;
        \item with $j=5$: $\sigma(6)>\sigma(5)$;
        \item with $j=6$: $\sigma(2)>\sigma(5)$;
        \item with $j=3$: $\sigma(4)>\sigma(3)$ by using $\sigma(2)>\sigma(5)$;
        \item with $j=4$: $\sigma(4)>\sigma(5)$ by using $\sigma(6)>\sigma(3)$.
    \end{itemize}
    For having $\sigma$ completely determined, we need also the three relations $\sigma(3)/\sigma(5)$, $\sigma(2)/\sigma(4)$ and $\sigma(4)/\sigma(6)$ (we already have $\sigma(2)<\sigma(6)$ by hypothesis on $\sigma$).

    For all $j\in\llbracket1,6\rrbracket$, if $\mu=(\sigma(j),\sigma(j+2))\circ\sigma$, one can easily compute that:
    \begin{equation*}
        \sum_{i=1}^6 a_{\sigma(i)}^2 a^2_{\sigma(i+1)}-a_{\mu(i)}^2 a^2_{\mu(i+1)}=\left(a_{\sigma(j)}^2-a_{\sigma(j+2)}^2\right)\left(a_{\sigma(j-1)}^2-a_{\sigma(j+3)}^2\right).
    \end{equation*}
    The inequality \ref{min} then shows:
    \begin{itemize}
        \item with $j=2$: $\sigma(2)>\sigma(4)$ by using $\sigma(5)>\sigma(1)$;
        \item with $j=3$: $\sigma(3)>\sigma(5)$ by using $\sigma(6)>\sigma(2)$;
        \item with $j=4$: $\sigma(6)>\sigma(4)$ by using $\sigma(3)>\sigma(1)$.
    \end{itemize}
    We then have $\sigma=\s{5}{3}{4}{2}{6}$.
\end{proof}

\begin{rem}
    To solve the problem we had to use such tools as Gershgorin disks or derivation because the simplest ones would fail. Indeed, the permutation which maximizes the first of the two permutation-dependant coefficients in $P_{\sigma,a}$ that is $\left(a_{\sigma(1)}^2a_{\sigma(3)}^2+\cdots +a_{\sigma(4)}^2a_{\sigma(6)}^2\right)$ is always $\s{5}{3}{4}{2}{6}$ as shown in Proposition \ref{min coeff x2}. Unfortunately, it is also the permutation which minimizes for all $a\in\mathcal{A}_n$ the constant term of $P_{\sigma,a}$ that is $-a_{\sigma(1)}^2a_{\sigma(3)}^2a_{\sigma(5)}^2-a_{\sigma(2)}^2a_{\sigma(4)}^2a_{\sigma(6)}^2$ as the next proposition will show.
\end{rem}

\begin{prop}\label{min coeff cst}
    For all $a\in\mathcal{A}_6$, among the 10 families of Proposition \ref{Min des familles}, the family that minimizes the value of $-a_{\sigma(1)}^2a_{\sigma(3)}^2a_{\sigma(5)}^2- a_{\sigma(2)}^2a_{\sigma(4)}^2a_{\sigma(6)}^2$ is the family of $\s{5}{3}{4}{2}{6}$, namely the permutations in $S_6/H_6$ for which $\{\sigma(1),\sigma(3),\sigma(5)\}=\{1,2,3\}$.
\end{prop}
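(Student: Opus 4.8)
The plan is to exploit that the quantity to be minimized depends only on how the index set $\{1,\dots,6\}$ is split into the two triples of odd and even positions. Writing $S:=\{\sigma(1),\sigma(3),\sigma(5)\}$ and $T:=\{\sigma(2),\sigma(4),\sigma(6)\}$, and setting $P_S:=\prod_{i\in S}a_i^2$ and $P_T:=\prod_{i\in T}a_i^2$, the value in the statement is exactly $-(P_S+P_T)$. Since the ten families of Proposition \ref{Min des familles} are indexed precisely by the sets $S$ (each representative having $\sigma(1)=1$, so $1\in S$, and each of the ten unordered partitions $\{S,T\}$ of $\{1,\dots,6\}$ into triples having exactly one part containing $1$), and since $-(P_S+P_T)$ is symmetric in $S$ and $T$, it suffices to determine the partition $\{S,T\}$ maximizing $P_S+P_T$.

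First I would record the elementary but crucial invariance: the product $P_SP_T=\prod_{i=1}^6 a_i^2=:P$ is the same for every partition. The key step is then to write $P_S+P_T=M+P/M$, where $M:=\max(P_S,P_T)\ge\sqrt{P}$, and to note that the function $g(M)=M+P/M$ is increasing on $[\sqrt P,\infty)$. Hence maximizing the nonnegative quantity $P_S+P_T$ under the fixed-product constraint is equivalent to maximizing $M=\max(P_S,P_T)$, i.e.\ to the single rearrangement question: among all $3$-element subsets of $\{1,\dots,6\}$, which has the largest product $\prod_{i\in\cdot}a_i^2$?

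Because $0\le a_1<\cdots<a_6$, the product over a triple is strictly maximized by selecting the three largest weights, so the unique maximizing triple is $\{4,5,6\}$, with value $a_4^2a_5^2a_6^2$; this forces the complementary triple to be $\{1,2,3\}$. Thus the unique partition maximizing $P_S+P_T$ is $\{\{1,2,3\},\{4,5,6\}\}$, equivalently the family with $\{\sigma(1),\sigma(3),\sigma(5)\}=\{1,2,3\}$. I would finish by checking that $\s{5}{3}{4}{2}{6}$ indeed realizes this family (its second row is $1,5,3,4,2,6$, so $\{\sigma(1),\sigma(3),\sigma(5)\}=\{1,2,3\}$), giving the claim.

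The only point requiring a little care is the degenerate case $a_1=0$, where $P=0$ and the monotonicity picture for $g$ collapses. However, the same argument applies with no modification: when $P=0$ exactly one of $P_S,P_T$ vanishes, so $P_S+P_T=\max(P_S,P_T)$ and we are again maximizing the product of a triple, which is still $a_4^2a_5^2a_6^2$, uniquely attained by $\{4,5,6\}$. I do not expect a genuine obstacle here, as the whole statement reduces to the two routine facts that a sum of two nonnegative numbers with fixed product is maximized by pushing the factors as far apart as possible, and that the largest product of three of the $a_i^2$ is $a_4^2a_5^2a_6^2$.
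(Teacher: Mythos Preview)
Your proof is correct and takes a genuinely different route from the paper. The paper compares the family $\{\sigma(1),\sigma(3),\sigma(5)\}=\{1,2,3\}$ directly against each of the other nine families: writing $E_1=\{\mu(1),\mu(3),\mu(5)\}$ and $E_2=\{\mu(2),\mu(4),\mu(6)\}$, it observes that (since $1\in E_1$ and $E_1\neq\{1,2,3\}$) exactly one of $E_1,E_2$ shares two indices $i_1,i_2$ with $\{1,2,3\}$ while the other shares two indices $j_1,j_2$ with $\{4,5,6\}$, and then factors the difference as
\[
\beta=\bigl(a_{i_1}^2a_{i_2}^2-a_{j_1}^2a_{j_2}^2\bigr)\bigl(a_j^2-a_i^2\bigr)<0,
\]
with $\{1,2,3\}=\{i,i_1,i_2\}$ and $\{4,5,6\}=\{j,j_1,j_2\}$. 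Your argument instead exploits the global constraint $P_SP_T=\prod_{k=1}^6 a_k^2$ to reduce the question to maximizing $\max(P_S,P_T)$ via the monotonicity of $M\mapsto M+P/M$ on $[\sqrt{P},\infty)$, and then invokes the trivial fact that the largest triple product is $a_4^2a_5^2a_6^2$. The paper's factorization yields an explicit expression for each pairwise difference (potentially useful for quantitative comparisons), while your fixed-product reduction is more conceptual, avoids any case distinction among the nine competitors, and would generalize immediately to arbitrary $n$ and to splits into blocks of other sizes.
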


\begin{proof}
    Let $\sigma=\s{5}{3}{4}{2}{6}$ and $\mu\in S_6/H_6$ such that $\{\mu(1),\mu(3),\mu(5)\}\neq\{1,2,3\}$. With the same notation as in the propositions \ref{delta} and \ref{Gershgo}, we want to show that $\beta<0$.

    Let's denote by $E_1:=\{\mu(1),\mu(3),\mu(5)\}$ and $E_2:=\{\mu(2),\mu(4),\mu(6)\}$. By hypothesis on $\mu$, we know that one set among $E_1$ and $E_2$ shares 2 points $i_1,i_2$ with $\{1,2,3\}$ and the other one shares two other points $j_1,j_2$ with $\{4,5,6\}$. If we fix $i,j$ such that $\{1,2,3\}=\{i,i_1,i_2\}$ and $\{4,5,6\}=\{j,j_1,j_2\}$, we have $\{E_1,E_2\}=\left\{\{j,i_1,i_2\},\{i,j_1,j_2\}\right\}$. That leads to
    \begin{equation*}
        \beta=\left(a_{i_1}^2a_{i_2}^2-a_{j_1}^2a_{j_2}^2\right)\left(a_j^2-a_i^2\right).
    \end{equation*}
    By definition of $i,i_1,i_2,j,j_1,j_2$, we always have $\left(a_{i_1}^2a_{i_2}^2-a_{j_1}^2a_{j_2}^2\right)<0$ and $\left(a_j^2-a_i^2\right)>0$. Hence $\beta<0$.
\end{proof}

\begin{rem}
    The propositions \ref{min coeff x2} and \ref{min coeff cst} and the remarks \ref{simple impossible} and \ref{proportions} shed light on the special role played by $\s{5}{3}{4}{2}{6}$ in our problem. In fact, this permutation also plays a special role for $n\neq6$ in the following sense. We have:
    \begin{equation*}
        \mathcal{M}_4=\left\{\compactpmatrix{
            1&2&3&4\\
            1&3&2&4}\right\}\text{ and }\mathcal{M}_5=\left\{\compactpmatrix{
            1&2&3&4&5\\
            1&4&3&2&5}\right\}.
    \end{equation*}
    For $n>6$, numerically the same pattern seems to appear in the shape of the "most successful" member of $\mathcal{M}_n$ (in the sense of Remark \ref{proportions}). This "most successful" permutation seems to be:
    \begin{align*}
        &(2,n-1) \text{ if }n-1>2\text{ and }n-3\leq4,\textit{ ie. }n=4,5,6,7;\\
        &(2,n-1)\circ(4,n-3) \text{ if }n-3>4\text{ and }n-5\leq6,\textit{ ie. }n=8,9,10,11;\\
        &(2,n-1)\circ(4,n-3)\circ(6,n-5) \text{ if }n-5>6\text{ and }n-7\leq8,\textit{ ie. }n=12,13,14,15;\\
        &\cdots
    \end{align*}
    The permutation $\sigma_M^{(a)}$ we give in the introduction is such that the big weights are close to each other and the small weights are close to each other. It is the contrary for our "most successful" permutation, the weights are such that we have one big weight following one small weight following one big weight and so on. For maximizing the numerical radius of $A_\sigma$, $\sigma$ needs to group the big weights in $a$ while for having $w(A_\sigma)$ small, a good way is for $\sigma$ to scatter the big weights in $a$ among the small ones.

    This most successful permutation also seems to verify a generalisation of Proposition \ref{min coeff x2} for all $n>6$. 
\end{rem}

\begin{rem}
    The techniques used in \cite{chang2012maximizing} and \cite{gau2024proof} use the fact that the eigenvector $x_M$ of $\RE(A_{\sigma_M})$ corresponding to the largest eigenvalue (so the numerical radius of $A_{\sigma_M}$ in our case) has only positive coefficients and the way they are ordered in $x_M$ follow almost $\sigma_M$. For the minimum, the coefficients of the eigenvector are still positive, but the way they are ordered in the eigenvector seems to us not to be linked with the minimizing permutation when it's unique.
\end{rem}

\begin{rem}
    One might ask, what is the situation for $n>6$. The cardinal of $\mathcal{M}_n$ with $n>6$ becomes too big for using the tools developed here and to understand the minimum case in the way of Theorem \ref{final}. For example, by numerical computations\footnote{We generate 100 000 weights $a\in \mathcal{A}_7$ randomly and build a subset of $\mathcal{M}_7$ with the different minimizing permutations computed.}, the cardinal of $\mathcal{M}_7$ is at least 51. It seems too big to say anything concrete.
\end{rem}

\section*{Acknowledgments}
The author acknowledges the support of the CDP C2EMPI, together
with the French State under the France-2030 programme, the University of Lille,
the Initiative of Excellence of the University of Lille, the European Metropolis of
Lille for their funding and support of the R-CDP-24-004-C2EMPI project.

\section*{Declaration of competing interest}

No competing interest.

\printnomenclature

\bibliography{References.bib}
\bibliographystyle{alpha}
\end{document}